\DeclareMathAlphabet{\mathpzc}{OT1}{pzc}{m}{it}
\title{faithful tropicalization of hypertoric varieties} 
\author{max b. kutler}
\date{}
\newcommand{\CC}{\mathbb{C}}
\newcommand{\RR}{\mathbb{R}}
\newcommand{\ZZ}{\mathbb{Z}}
\renewcommand{\AA}{\mathbb{A}}
\newcommand{\PP}{\mathbb{P}}
\newcommand{\TT}{\mathbb{T}}
\newcommand{\GG}{\mathbb{G}}
\newcommand{\fM}{\mathfrak{M}}
\newcommand{\fB}{\mathfrak{B}}
\newcommand{\cB}{\mathcal{B}}
\newcommand{\cF}{\mathcal{F}}
\newcommand{\cR}{\mathcal{R}}
\newcommand{\cT}{\mathcal{T}}
\newcommand{\cA}{\mathcal{A}}
\newcommand{\id}{\operatorname{id}}
\newcommand{\sgn}{\operatorname{sgn}}
\newcommand{\Hom}{\operatorname{Hom}}
\newcommand{\ev}{\operatorname{ev}}
\newcommand{\codim}{\operatorname{codim}}
\newcommand{\Spec}{\operatorname{Spec}}
\newcommand{\trop}{\operatorname{trop}}
\newcommand{\Trop}{\operatorname{Trop}}
\newcommand{\an}{\operatorname{an}}
\newcommand{\relint}{\operatorname{relint}}
\newcommand{\Gr}{\operatorname{Gr}}
\newcommand{\rk}{\operatorname{rk}}
\newcommand{\trunc}{\operatorname{trunc}}
\numberwithin{equation}{section}
\newtheorem{thm}{Theorem}[section]
\newtheorem*{introhtvcones}{Theorem \ref{thm:htvcones}}
\newtheorem*{introfaithful}{Theorem \ref{thm:faithful}}
\newtheorem{lem}[thm]{Lemma}
\newtheorem{prop}[thm]{Proposition}
\newtheorem{cor}[thm]{Corollary}
\theoremstyle{definition}
\newtheorem{rem}[thm]{Remark}
\newtheorem{ex}[thm]{Example}
\begin{document}

\begin{abstract}
The hypertoric variety $\fM_{\cA}$ defined by an affine arrangement $\cA$ admits a natural tropicalization, induced by its embedding in a Lawrence toric variety.  We explicitly describe the polyhedral structure of this tropicalization. Using a recent result of Gubler, Rabinoff, and Werner, we prove that there is a continuous section of the tropicalization map.
\end{abstract}

\maketitle

\section{Introduction}

In this paper, we study the tropicalization of the hypertoric variety $\fM_{\cA}$ defined by an arrangement $\cA$ of affine hyperplanes. Hypertoric varieties were first studied by Bielawski and Dancer \cite{BD00}. They are ``hyperk\"{a}hler analogues'' of toric varieties, and examples of conical symplectic resolutions. The relationship between the variety $\fM_{\cA}$ and the arrangement $\cA$ is analogous to that between a semiprojective toric variety and its polyhedron. See, e.g., \cite{P06} for an overview of this relationship. The hypertoric variety $\fM_{\cA}$ is not, in general, a toric variety. However, it is naturally defined as a closed subvariety of a toric variety, the Lawrence toric variety $\fB_{\cA}$. The Lawrence embedding allows us to define a tropicalization of $\fM_{\cA}$.

Given a closed embedding of a variety $X$ in a toric variety, there is a corresponding tropicalization $\Trop(X)$, which is the continuous image of the Berkovich space $X^{\an}$ under the tropicalization map. The tropicalization may be endowed with the structure of a finite polyhedral complex. A single variety $X$ may yield many distinct tropicalizations, each given by a different choice of embedding into a toric variety. When we speak of \emph{the} tropicalization of $X$, it is always with respect to a chosen embedding.

By a result of Foster, Gross, and Payne \cite{P09, FGP14}, if $X$ has at least one embedding into a toric variety, then the inverse system of all such embeddings induces an inverse system of tropicalizations, and the limit of this system in the category of topological spaces is $X^{\an}$. This raises the question of how well a particular tropicalization approximates the geometry of the analytic space. To this end, a tropicalization is said to be \textbf{faithful} if it admits a continuous section to the tropicalization map $X^{\an} \to \Trop(X)$, and thus $\Trop(X)$ is homeomorphic to a closed subset of $X^{\an}$.

If $X$ is embedded in a torus, as opposed to a more general toric variety, then $\Trop(X)$ is the support of a polyhedral complex, which is balanced when the polyhedra are weighted by tropical multiplicity.  Gubler, Rabinoff, and Werner have proved that the tropicalization is faithful if all tropical multiplicities are equal to one \cite{GRW14}. This was first proved in the case where $X$ is a curve by Baker, Payne, and Rabinoff \cite{BPR11}, who further showed that the tropicalization map is in fact an isometry on finite subgraphs of the hyperbolic interior of $X^{\an}$.

In the more general situation where $X$ is embedded in a toric variety, tropical multiplicity one is no longer sufficient to imply faithfulness: while $\Trop(X)$ may be computed orbit-by-orbit, the continuous sections defined on each stratum may not glue to a continuous section on all of $\Trop(X)$ \cite[Example 8.11]{GRW15}. However, Gubler, Rabinoff, and Werner \cite[Theorem 8.14]{GRW15} have recently proved that if $X$ is embedded in a toric variety with dense torus $T$, then the resulting tropicalization is faithful if the following conditions are satisfied:
\begin{itemize}
\item $X \cap T$ is dense in $X$;
\item the intersection of $X$ with each torus orbit is either empty or equidimensional;
\item $\Trop(X)$ has multiplicity one everywhere;
\item There is a polyhedral structure on $\Trop(X)$ such that for each maximal polyhedron $P$ in $\Trop(X \cap T)$, the closure $\overline{P}$ is a union of polyhedra, each of which is maximal in its respective stratum.
\end{itemize}
While previous results on faithful tropicalizations \cite{CHW14, DP14}, required careful study of Berkovich spaces and their skeleta, that analysis is now generalized and absorbed into the proof of the above criteria, so that faithfulness may be checked by exclusively working ``downstairs,'' with the tropicalization.

In practice, however, it can be quite difficult to check the criteria of \cite{GRW15}. For instance, the Grassmannian of planes $\Gr(2,n)$ is faithfully tropicalized by its Pl\"{u}cker embedding, as originally proved in \cite{CHW14}. A shorter proof may be obtained with the aid of the above criteria, but one still needs the combinatorial results from Sections 4 and 5 of \cite{CHW14}, which rely on the interpretation, due to Speyer and Sturmfels \cite{SS04}, of the tropicalization as the space of phylogenetic trees. By constrast, for $k \geq 3$ there is no known combinatorial interpretation of $\Trop(\Gr(k,n))$, and the question of faithfulness remains open. Moreover, there is no reason to expect that a ``natural'' embedding into a toric variety, such as the Pl\"{u}cker embedding, should yield a faithful tropicalization. For a given variety, there is no known procedure for obtaining a faithful tropicalization.

Here, we show that an arbitrary hypertoric variety $\fM_{\cA}$ is faithfully tropicalized by its embedding in the Lawrence toric variety $\fM_{\cA}$ by proving that the criteria listed above are satisfied. This is a notable result for a number of reasons. To our knowledge, this is the first application of Gubler, Rabinoff, and Werner's criteria to a class of tropicalizations for which faithfulness was previously unknown. We obtain several examples, in every even dimension, of varieties which are faithfully tropicalized by a ``natural'' embedding into a toric variety. These examples include the cotangent bundles of projective spaces and products of projective spaces, as well as many singular varieties.

Furthermore, we shall see that, in all but the most trivial case, the hypertoric variety $\fM_{\cA}$ in its Lawrence embedding does not meet all torus orbits in the expected dimension (Corollary \ref{cor:notproper}). This is in contrast to several other known examples of ``nice'' tropicalizations, including the moduli space $\overline{M}_{0,n}$ of stable rational curves \cite{GM10, T07}, some alternate compactifications of $M_{0,n}$ \cite{CHMR14}, and the space of logarithmic stable maps to a projective toric variety \cite{R15}. By \cite[Corollary 8.15]{GRW15}, a variety which meets all torus orbits in the expected dimension, or not at all, is faithfully tropicalized if it has multiplicity one everywhere. Since this result is not available to us, we must  explicitly describe a polyhedral structure on $\Trop(\fM_{\cA})$. This is provided by our first main result.

\begin{introhtvcones}
The tropicalization $\Trop(\fM_{\cA})$ of the hypertoric variety is the set-theoretic disjoint union of cones $C^{(F, \cR)}_{\cF}$ indexed by a flat $F$ of $\cA_0$, a face $\cR$ of the localization $\cA_F$, and a flag of flats $\cF$ of the restriction $\cA_0^F$. These cones satisfy
\[ \dim C^{(F, \cR)}_{\cF} = d + \ell(\cF) - \codim \cR. \]
This gives $\Trop(\fM_{\cA})$ the combinatorial structure of a finite polyhedral complex, under the closure relation
\[ C^{(F', \cR')}_{\cF'} \subseteq \overline{C^{(F, \cR)}_{\cF}} \]
if and only if the following conditions hold:
\begin{itemize}
\item $F \subseteq F'$;
\item $\cR' \subseteq \overline{\cR}$;
\item $F'$ is a flat in $\cF$, and $\trunc_{F'}(\cF)$ is a refinement of $\cF'$.
\end{itemize}
Moreover, this gives each stratum $\Trop(\fM_{\cA} \cap O(\sigma_{F, \cR}))$ the structure of a polyhedral fan, which is balanced when all cones are given weight one.
\end{introhtvcones}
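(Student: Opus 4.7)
The strategy is to compute $\Trop(\fM_\cA)$ one torus orbit at a time. The cones of the fan of the Lawrence toric variety $\fB_\cA$, and hence its torus orbits, are indexed exactly by pairs $\sigma_{F,\cR}$ with $F$ a flat of $\cA_0$ and $\cR$ a face of the localization $\cA_F$. The first step would be to describe the scheme-theoretic intersection $\fM_\cA \cap O(\sigma_{F,\cR})$: using the defining equations of $\fM_\cA$ inside $\fB_\cA$, I would identify this intersection, up to a free torus factor, with the affine hypertoric variety attached to the restriction arrangement $\cA_0^F$. Carrying out this reduction cleanly is the main technical obstacle, since the Lawrence equations must be analyzed in each toric chart to ensure that the expected intersection appears with the correct dimension and without spurious components.

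\medskip

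Granted such a description, tropicalization of each stratum reduces to a well-understood computation: the tropicalization of the affine hypertoric variety of a central arrangement is the Bergman fan of the underlying matroid, up to a linear factor reflecting the ambient Lawrence torus. The cones of this Bergman fan are indexed by flags of flats $\cF$ of $\cA_0^F$, and the dimension of the corresponding cone $C^{(F,\cR)}_\cF$ is $\ell(\cF)$ plus a constant accounting for the linear factor and the codimension of the chosen orbit, yielding the asserted formula $d + \ell(\cF) - \codim \cR$. Taking the disjoint union over all $(F,\cR,\cF)$ gives the set-theoretic decomposition of $\Trop(\fM_\cA)$ into the cones $C^{(F,\cR)}_\cF$, and finiteness is immediate from the finiteness of the indexing data.

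\medskip

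To promote this decomposition to a polyhedral complex, I would verify the stated closure relations. The conjunction of $F \subseteq F'$ and $\cR' \subseteq \overline{\cR}$ is exactly the face relation $\sigma_{F,\cR} \preceq \sigma_{F',\cR'}$ in the fan of $\fB_\cA$ and thus records the closure of torus orbits. The remaining condition, that $F'$ is a flat of $\cF$ and that $\trunc_{F'}(\cF)$ refines $\cF'$, captures the analogous closure behavior inside Bergman fans under matroid restriction: the ``if'' direction reduces to writing down explicit one-parameter limit families of points in $\Trop(\fM_\cA)$, while the ``only if'' direction uses the fact that limits of rays in a Bergman fan are governed by the flag structure on flats. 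Finally, each stratum $\Trop(\fM_\cA \cap O(\sigma_{F,\cR}))$ is balanced with all weights equal to one by the Ardila--Klivans theorem for matroidal Bergman fans, since taking a product with a linear subspace and translating into a boundary orbit preserve this property stratum by stratum.
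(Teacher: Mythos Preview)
Your overall strategy---stratify by torus orbits, recognize each stratum's tropicalization as a Bergman fan up to a linear factor, then assemble the closure relations---matches the paper's. Two points deserve correction or comparison.

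First, a factual slip: the cones of the Lawrence fan $\Delta_\cA$ are indexed by pairs $(S,\cR)$ with $S$ an \emph{arbitrary} subset of $E$ and $\cR$ a face of $\cA_S$ (Proposition~\ref{prop:cones}), not only by flats. That $\fM_\cA$ meets only those orbits $O(\sigma_{S,\cR})$ with $S$ a flat is a separate fact (Proposition~\ref{prop:linear}); conflating the two obscures the logic.

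Second, your ``main technical obstacle''---identifying $\fM_\cA \cap O(\sigma_{F,\cR})$ with an affine hypertoric variety for $\cA_0^F$ times a torus by analyzing the Lawrence equations chart by chart---is bypassed entirely in the paper. Since $\fM_\cA = \Phi^{-1}(L_{\cA_0})$ for the toric morphism $\Phi\colon \fB_\cA \to \AA^E$, the restriction of $\Phi$ to each orbit $O(\sigma_{F,\cR})$ is a split surjection of tori onto $\GG_m^{E\setminus F}$, and $\fM_\cA \cap O(\sigma_{F,\cR})$ is simply the preimage of the linear space $L_{\cA_0}\cap\GG_m^{E\setminus F}$. By functoriality of tropicalization, $\Trop(\fM_\cA\cap O(\sigma_{F,\cR}))$ is then the preimage of the Bergman fan of $\cA_0^F$ under the linear surjection $\Trop(\Phi)$, which immediately gives the cones $C^{(F,\cR)}_\cF$, their dimensions, and balancedness with weight one. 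No chart analysis or auxiliary hypertoric variety is needed; indeed, the identification you propose is not obviously well-posed as stated, since the dimension $2d-\rk F-\codim\cR$ depends on $\cR$ in a way a single ``free torus factor'' over $\fM_{\cA_0^F}$ does not explain. For the closure relations the paper again exploits $\Phi$: Lemma~\ref{lem:clcone} uses commuting squares of projections together with \cite[Lemma~3.9]{OR13} to show $\overline{C^{(F,\cR)}_\cF}\cap\Trop(O(\sigma_{S,\cR'})) = C^{(S,\cR')}_{\trunc_S(\cF)}$ precisely when $S$ is a flat appearing in $\cF$. Your one-parameter limits would presumably recover this, but the projection argument is shorter and yields the exact cone, not just nonemptiness of the intersection.
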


Equipped with Theorem \ref{thm:htvcones}, we can describe the interplay between the fan $\Trop(\fM_{\cA} \cap \widetilde{T})$, where $\widetilde{T}$ is the dense torus of $\fB_{\cA}$, and the fan of the toric variety $\fB_{\cA}$. Each of these fans has cones described by the combinatorics of the arrangement $\cA$: cones in the fan of $\fB_{\cA}$ correspond to faces of localizations of $\cA$, while $\Trop(\fM_{\cA} \cap \widetilde{T})$ has cones indexed by flags of flats. We see that the criteria of \cite{GRW15} are satisfied, proving the tropicalization is faithful.

\begin{introfaithful}
There is a unique continuous section $s \colon \Trop(\fM_{\cA}) \to \fM_{\cA}^{\an}$ of the tropicalization map.
\end{introfaithful}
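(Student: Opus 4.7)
The plan is to apply the faithfulness criterion of Gubler, Rabinoff, and Werner \cite[Theorem 8.14]{GRW15} to the Lawrence embedding $\fM_{\cA} \hookrightarrow \fB_{\cA}$. Theorem \ref{thm:htvcones} makes each of the four hypotheses essentially a matter of combinatorial inspection, so the whole argument reduces to reading off properties from the description of the cones $C^{(F, \cR)}_{\cF}$ and their closure relations.

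First, density of $\fM_{\cA} \cap \widetilde{T}$ in $\fM_{\cA}$ follows from the construction of the hypertoric variety as a symplectic quotient: the preimage in $\fM_{\cA}$ of the dense torus of $\fB_{\cA}$ is an open subvariety containing the ``core torus'' of the hypertoric variety, and hence is dense in each component. Equidimensionality of the intersections $\fM_{\cA} \cap O(\sigma_{F, \cR})$ is immediate from Theorem \ref{thm:htvcones}: for fixed $(F, \cR)$, the dimension formula $\dim C^{(F, \cR)}_{\cF} = d + \ell(\cF) - \codim \cR$ shows that the top-dimensional cones in this stratum are precisely those indexed by maximal flags $\cF$ of $\cA_0^F$, and the standard correspondence between the dimension of a subscheme and that of its tropicalization yields equidimensionality of the stratum itself.

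The multiplicity-one hypothesis is exactly the unit-weight balancing assertion in Theorem \ref{thm:htvcones}. The fourth criterion is the most substantive, and is where the bulk of the work concentrates. Given a maximal cone $C^{(F_0, \cR_0)}_{\cF_0}$ in $\Trop(\fM_{\cA} \cap \widetilde{T})$ (where $(F_0, \cR_0)$ indexes the zero cone in the fan of $\fB_{\cA}$ and $\cF_0$ is a maximal flag of flats of $\cA_0$), I would use the closure relations of Theorem \ref{thm:htvcones} to enumerate the cones $C^{(F, \cR)}_{\cF}$ contained in $\overline{C^{(F_0, \cR_0)}_{\cF_0}}$. The conditions force $F$ to be one of the flats appearing in $\cF_0$, $\cR$ to be a face of a localization obtained from $\cR_0$, and $\cF$ to be refined by $\trunc_F(\cF_0)$; in fact the maximal choice $\cF = \trunc_F(\cF_0)$ is itself a maximal flag of $\cA_0^F$, and a direct comparison with the dimension formula confirms that these boundary cones are maximal in their respective strata.

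With the four hypotheses verified, \cite[Theorem 8.14]{GRW15} produces a continuous section $s \colon \Trop(\fM_{\cA}) \to \fM_{\cA}^{\an}$. For uniqueness, observe that on the dense open part $\Trop(\fM_{\cA} \cap \widetilde{T})$ the section is forced to be the canonical multiplicity-one section of \cite{GRW14}; since this open part is dense in $\Trop(\fM_{\cA})$ and $\fM_{\cA}^{\an}$ is Hausdorff, any continuous section is determined by its restriction there. The main obstacle is the fourth criterion: the indexing of cones by triples $(F, \cR, \cF)$ mixes three separate combinatorial structures -- flats of $\cA_0$, faces of a localization, and flags in a restriction -- and careful bookkeeping is required to translate the closure relations of Theorem \ref{thm:htvcones} into the statement that closures of maximal cones meet each stratum in maximal cones.
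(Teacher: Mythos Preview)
Your overall strategy---verify the four hypotheses of \cite[Theorem~8.14]{GRW15} using the combinatorics of Theorem~\ref{thm:htvcones}---is exactly the paper's approach, and your treatment of the fourth (polyhedral) criterion is essentially the same as the paper's, which invokes Lemma~\ref{lem:clcone} directly rather than the closure relations in Theorem~\ref{thm:htvcones}.

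Two of your verifications, however, take detours where the paper has a direct route. For density, the paper simply observes that $\fM_{\cA}$ is irreducible (stated at the start of Section~\ref{sec:htv}), so any nonempty open subset is dense; your appeal to a ``core torus'' is vague and unnecessary. More seriously, your equidimensionality argument has a genuine gap: pure-dimensionality of a tropicalization does \emph{not} imply equidimensionality of the underlying scheme, since a lower-dimensional component could have its tropicalization contained in that of a larger one. The paper avoids this by citing Proposition~\ref{prop:linear}, which shows each nonempty $\fM_{\cA} \cap O(\sigma_{F,\cR})$ is a linear subvariety of the orbit, hence irreducible. You should use that proposition directly.

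For uniqueness, your density-plus-Hausdorff argument is fine in spirit, but the paper just cites \cite[Proposition~8.8]{GRW15}, which packages exactly that reasoning.
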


The rest of the paper is outlined as follows. We review the combinatorics of hyperplane arrangements in Section \ref{sec:arrangements}, and in Section \ref{sec:toric} we recall basic facts about analytification and tropicalization. Sections \ref{sec:ltv} and \ref{sec:htv} contain the constructions of the Lawrence toric variety $\fB_{\cA}$ and the hypertoric variety $\fM_{\cA}$, respectively. In Section 6, we study the tropicalization $\Trop(\fM_{\cA})$ and present our main results.
\\ \\
\textbf{Acknowledgements.} I would like to thank Nick Proudfoot for his guidance and many helpful suggestions. I am also grateful to Dhruv Ranganathan for his comments on an earlier draft of this paper.
\\ \\
\textbf{Notation.} Throughout, we fix a lattice $M \cong \ZZ^d$ and a field $K$, complete with respect to a non-Archimedean valuation which may be trivial. The dual lattice to $M$ is $N = \Hom(M,\ZZ)$, and we set $M_{\RR} = M \otimes_{\ZZ} \RR$ and $N_{\RR} = N \otimes_{\ZZ} \RR = \Hom(M, \RR)$.

\section{Hyperplane arrangements} \label{sec:arrangements}

Given a finite set $E$, a tuple $a \in N^E$ of nonzero elements, and $r \in \ZZ^E$, we define the corresponding \textbf{arrangement} $\cA = \cA(a,r)$ of affine hyperplanes in $M_{\RR}$ to be the multiset consisting of the hyperplanes
\[ H_e = \{ m \in M_{\RR} \mid \langle m, a_e \rangle + r_e = 0 \} \]
for $e \in E$. If $\{a_e \mid e \in E\}$ generates the lattice $N$ and each $a_e$ is a primitive vector, we say that $a$ is a \textbf{primitive spanning configuration}. We shall always consider arrangements defined by a primitive spanning configuration, except for the localizations of such arrangements (see below). The hyperplane $H_e$ is cooriented by the integral normal vector $a_e$, with ``positive'' and ``negative'' closed halfspaces,
\[ H_e^+ = \{ u \in M_{\RR} \mid \langle u, a_e \rangle + r_e \leq 0 \} \]
and
\[ H_e^- = \{ u \in M_{\RR} \mid \langle u, a_e \rangle + r_e \geq 0 \}, \]
respectively.

An arrangement $\cA = \cA(a,r)$ is \textbf{central} if $r = 0$, so that each hyperplane $H_e$ is a linear subspace of $M_{\RR}$. On the other hand, $\cA$ is \textbf{simple} if the intersection of any $k$ hyperplanes is either empty or has codimension $k$, and $\cA$ is \textbf{unimodular} if every collection of $d$ linearly independent normal vectors $\{a_{e_1}, \ldots, a_{e_d}\}$ is a basis of the lattice $N$. An arrangement is \textbf{smooth} if it is both simple and unimodular. Given $\cA = \cA(a,r)$, we call the arrangement $\cA_0 = \cA(a,0)$ the \textbf{centralization} of $\cA$. We let $(H_e)_0$ denote the hyperplane indexed by $e$ in $\cA_0$.

An arrangement $\cA$ defines a loop-free matroid on the set $E$, via the centralization $\cA_0$. This matroid is defined by the rank function $\rk S = \codim H_S$, where $H_S = \cap_{e \in S} (H_e)_0$ (equivalently, $\rk S$ is equal to the dimension of the subspace of $N_{\RR}$ spanned by $\{a_s \mid s \in S\}$). We set $\rk \cA_0 = \rk E$, which is equal to $d$ because $a$ is spanning. A subset $F \subseteq E$ is a \textbf{flat} of $\cA_0$ if it is maximal for its rank, or equivalently
\[ F = \{e \in E \mid (H_e)_0 \supseteq H_F\}. \]
By a \textbf{flag} $\cF$ of flats in $\cA_0$, we mean a chain
\[ \emptyset = F_0 \subsetneq F_1 \subsetneq \cdots \subsetneq F_{k-1} \subsetneq F_k = E \]
where each $F_i$ is a flat. The length of such a flag, denoted $\ell(\cF)$, is the number $k$ of nonempty flats in $\cF$, so that a maximal flag has length $\rk \cA$.

Given a flat $F$ of $\cA_0$, the \textbf{restriction} of $\cA_0$ to $F$, denoted $\cA_0^F$, is the arrangement of hyperplanes $\{(H_e)_0 \cap H_F \mid e \notin F\}$ in the vector space $H_F$. Flats of $\cA_0^F$ are subsets of $E \setminus F$ of the form $F' \setminus F$ for $F'$ a flat of $\cA_0$ containing $F$. We shall therefore identify flats of $\cA_0^F$ with flats of $\cA_0$ which contain $F$. In particular, a flag of flats in $\cA_0^F$ is a chain of flats in $\cA_0$ beginning at $F$ and ending at $E$.

On the other hand, given any subset $S \subseteq E$, we define the \textbf{localization} of the (possibly non-central) arrangement $\cA$ at $S$ to be the arrangement of hyperplanes $\{H_e \mid e \in S\}$ in the vector space $M_{\RR}$. Since a subset of a primitive spanning configuration need not span $N$, the localization $\cA_S$ may not be defined by a primitive spanning configuration, and therefore the intersection of all hyperplanes in $\cA_S$ may have positive dimension. Note that $(\cA_0)_S = (\cA_S)_0$, and that flats of $(\cA_S)_0$ are the flats of $\cA_0$ which are contained in $S$.

An arrangement $\cA$ assigns a \textbf{sign vector} $\sgn_{\cA}(m) \in \{+,0,-\}^E$ to each $m \in M_{\RR}$, via
\[ \sgn_{\cA}(m)_e = \begin{cases}
+ & \text{if $m \in H_e^+ \setminus H_e$}, \\
0 & \text{if $m \in H_e$}, \\
- & \text{if $m \in H_e^- \setminus H_e$}.
\end{cases} \]
A nonempty fiber of $\sgn_{\cA} \colon M_{\RR} \to \{+,0,-\}^E$ is called a \textbf{face} of the arrangement $\cA$. A face consisting of a single point is called a \textbf{vertex} of $\cA$.

Given a face $\cR$ of $\cA$, we define the sets $E^+(\cR) = \{e \in E \mid \cR \subseteq H_e^+\}$, $E^-(\cR) = \{e \in E \mid \cR \subseteq H_e^-\}$, and $E^0(\cR) = E^+(\cR) \cap E^-(\cR)$. Then for any $m \in \cR$, we have
\[ \sgn_{\cA}(m)_e = \begin{cases}
+ & \text{if $e \in E^+(\cR) \setminus E^0(\cR)$}, \\
0 & \text{if $e \in E^0(\cR)$}, \\
- & \text{if $e \in E^-(\cR) \setminus E^0(\cR)$}.
\end{cases} \]
Notice that the closure of a face is the intersection of all halfspaces which contain it:
\begin{equation} \label{eq:clface}
\overline{\cR} = \Big( \bigcap_{e \in E^+(\cR)} H_e^+ \Big) \cap \Big( \bigcap_{e \in E^-(\cR)} H_e^- \Big).
\end{equation}
The codimension of $\cR$ in $M_{\RR}$ is the codimension of the intersection of all hyperplanes containing it, so that
\begin{equation} \label{eq:codimface}
\codim \cR = \codim \bigcap_{e \in E^0(\cR)} H_e = \codim \bigcap_{e \in E^0(\cR)} (H_e)_0 = \rk (\cA_{E^0(\cR)})_0.
\end{equation}

The above discussion and notation applies to localizations of $\cA$ as well. For instance, if $S \subseteq E$, then the arrangement $\cA_S$ defines a sign function $\sgn_{\cA_S} \colon M_{\RR} \to \{+, 0, -\}^S$ which determines the faces of $\cA_S$. Given a face $\cR$ of $\cA_S$, the sets $S^+(\cR)$, $S^-(\cR)$, and $S^0(\cR)$ are defined as above, and $\overline{\cR}$ and $\codim \cR$ are computed in the same way, with $E$ replaced by $S$.

\begin{lem} \label{lem:clface}
Let $S \subseteq E$. If $\cR$ is a face of the localization $\cA_S$ and $\cR'$ is a face of $\cA$, then $\cR' \subseteq \overline{\cR}$ if and only if $E^+(\cR') \supseteq S^+(\cR)$ and $E^-(\cR') \supseteq S^-(\cR)$.
\end{lem}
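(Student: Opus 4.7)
The plan is to unfold the definitions and invoke the closure formula \eqref{eq:clface}, which applies equally well to faces of the localization $\cA_S$ (with $S$ replacing $E$). Since the argument is symmetric in the roles of $+$ and $-$, I would just track one side carefully and observe that the other is identical.

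More explicitly: applying \eqref{eq:clface} to the face $\cR$ of $\cA_S$ gives
\[ \overline{\cR} = \Big( \bigcap_{e \in S^+(\cR)} H_e^+ \Big) \cap \Big( \bigcap_{e \in S^-(\cR)} H_e^- \Big). \]
Thus $\cR' \subseteq \overline{\cR}$ is equivalent to the conjunction of $\cR' \subseteq H_e^+$ for every $e \in S^+(\cR)$ and $\cR' \subseteq H_e^-$ for every $e \in S^-(\cR)$.

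For the forward direction, assuming $\cR' \subseteq \overline{\cR}$, each $e \in S^+(\cR)$ satisfies $\cR' \subseteq H_e^+$, which by the definition $E^+(\cR') = \{e \in E \mid \cR' \subseteq H_e^+\}$ places $e$ in $E^+(\cR')$; hence $S^+(\cR) \subseteq E^+(\cR')$. The analogous reasoning gives $S^-(\cR) \subseteq E^-(\cR')$.

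For the reverse direction, assuming the two containments, every $e \in S^+(\cR)$ lies in $E^+(\cR')$, which means $\cR' \subseteq H_e^+$, and similarly for $e \in S^-(\cR)$. Intersecting over $e$ places $\cR'$ inside the right-hand side of the displayed formula, i.e., inside $\overline{\cR}$. I do not anticipate any obstacle here: the lemma is essentially a restatement of \eqref{eq:clface} once one recognizes that $e \in E^{\pm}(\cR')$ is by definition the condition $\cR' \subseteq H_e^{\pm}$.
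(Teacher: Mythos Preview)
Your proof is correct and essentially identical to the paper's: both directions unfold the definitions of $E^{\pm}(\cR')$ and invoke the closure formula \eqref{eq:clface} for $\overline{\cR}$. The only cosmetic difference is that in the forward direction the paper phrases it as ``any closed halfspace containing $\cR$ contains $\overline{\cR}$'' rather than appealing directly to \eqref{eq:clface}, but this is the same observation.
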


\begin{proof}
Suppose $\cR' \subseteq \overline{\cR}$. Since the halfspaces $H_e^+$ and $H_e^-$ are closed, any halfspace which contains $\cR$ also contains $\overline{\cR}$, hence contains $\cR'$. That is, $E^+(\cR') \supseteq S^+(\cR)$ and $E^-(\cR') \supseteq S^-(\cR)$

Conversely, suppose $E^+(\cR') \supseteq S^+(\cR)$ and $E^-(\cR') \supseteq S^-(\cR)$. Then for each $e \in S^+(\cR)$, we also have $e \in E^+(\cR')$ and therefore $\cR' \subseteq H_e^+$. Similarly, $\cR' \subseteq H_e^-$ for each $e \in S^-(\cR)$. By \eqref{eq:clface}, we conclude that $\cR' \subseteq \overline{\cR}$.
\end{proof}

\section{Toric varieties and tropicalization} \label{sec:toric}

Let $T = \Spec K[M]$ be the split $K$-torus with character lattice $M$ and cocharacter lattice $N$. Let $\Delta$ be a (pointed) rational polyhedral fan in $N_{\RR}$.

Each cone $\sigma \in \Delta$ defines an affine toric variety $Y_{\sigma} = \Spec K[\sigma^{\vee} \cap M]$ with dense torus $T$. For $\tau \prec \sigma$ in $\Delta$, $Y_{\tau}$ is naturally an open subvariety of $Y_{\sigma}$. Gluing along these identifications, we obtain the $T$-toric variety $Y_{\Delta} = \cup_{\sigma \in \Delta} Y_{\sigma}$ defined by $\Delta$.

For each $\sigma \in \Delta$, we have the torus orbit $O(\sigma) = \Spec K[M(\sigma)]$, where $M(\sigma) = \sigma^{\perp} \cap M$, and $Y_{\Delta}$ is (set-theoretically) partitioned into torus orbits: $Y_{\Delta} = \sqcup_{\sigma \in \Delta} O(\sigma)$. The dual lattice of $M(\sigma)$ is $N(\sigma) = N/(\langle \sigma \rangle \cap N)$, where $\langle \sigma \rangle = \RR\sigma$ is the linear span of $\sigma$ in $N_{\RR}$. We write $M_{\RR}(\sigma) = M(\sigma) \otimes_{\ZZ} \RR$ and $N_{\RR}(\sigma) = N(\sigma) \otimes_{\ZZ} \RR$. For $\tau \prec \sigma$ we let $\pi^{\tau}_{\sigma} \colon N_{\RR}(\tau) \to N_{\RR}(\sigma)$ be the projection. If $\tau = 0$, then we simply write $\pi_{\sigma} \colon N_{\RR} \to N_{\RR}(\sigma)$. For a cone $\tau \in \Delta$, the orbit closure $\overline{O(\tau)}$ is a toric variety with dense torus $O(\tau)$. Its fan is the collection of cones $\{ \pi_{\tau}(\sigma) \mid \sigma \succ \tau \}$ in $N_{\RR}(\tau)$.

A morphism of tori $f \colon T \to T'$ is given by a map of lattices $f_* \colon N \to N'$, or equivalently by the dual map $f^* \colon M' \to M$. Given fans $\Delta$ in $N_{\RR}$ and $\Delta'$ in $N'_{\RR}$, the morphism $f$ extends to a map of toric varieties $Y_{\Delta} \to Y_{\Delta'}$ if for each cone $\sigma' \in \Delta$, there exists a cone $\sigma' \in \Delta'$ such that $f_*(\sigma) \subseteq \sigma'$.

Following \cite{G15}, we define a \textbf{linear subvariety} $L$ of the torus $T$ to be a subvariety in some choice of torus coordinates. That is, there exists an isomorphism $M \cong \ZZ^n$, inducing $K[M] \cong K[\ZZ^n] \cong K[x_1^{\pm 1}, \ldots, x_n^{\pm 1}]$, such that the ideal of $L$ is generated by linear forms in the $x_i$.

\begin{lem} \label{lem:linear}
If $f \colon T \to T'$ is a split surjection of tori, and $L \subseteq T'$ is a linear subvariety, then $f^{-1}(L)$ is a linear subvariety of $T$. Furthermore, $\codim_T f^{-1}(L) = \codim_{T'} L$.
\end{lem}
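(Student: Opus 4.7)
The plan is to use the splitting of $f$ to identify $T$ with a product $T' \times K$ where $K = \ker f$, and then transport coordinates from the factors to witness $f^{-1}(L)$ as linear.

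Since $f \colon T \to T'$ is a split surjection of tori, the corresponding map of cocharacter lattices $f_* \colon N \to N'$ is surjective and admits a section, so we may write $N \cong N' \oplus N''$ with $N'' = \ker f_*$. Dually, $M$ decomposes as $f^*(M') \oplus M''$ for a complement $M''$. This yields an isomorphism of algebraic tori $T \cong T' \times K$ under which $f$ becomes projection to the first factor, so that $f^{-1}(L) = L \times K$.

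Next I would choose compatible coordinates. By hypothesis there is an isomorphism $M' \cong \ZZ^n$ giving coordinates $x_1, \ldots, x_n$ on $T'$ in which the ideal of $L$ is generated by linear forms $\ell_1, \ldots, \ell_r \in K[x_1^{\pm 1}, \ldots, x_n^{\pm 1}]$. Picking any basis of $M''$ provides coordinates $y_1, \ldots, y_m$ on $K$, and concatenating gives an isomorphism $M \cong \ZZ^{n+m}$ with coordinates $x_1, \ldots, x_n, y_1, \ldots, y_m$ on $T$. In this coordinate system, the ideal of $f^{-1}(L) = L \times K$ is generated by the same linear forms $\ell_1, \ldots, \ell_r$, so $f^{-1}(L)$ is a linear subvariety of $T$. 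For the codimension claim, the product decomposition gives $\dim f^{-1}(L) = \dim L + \dim K$ and $\dim T = \dim T' + \dim K$, hence
\[ \codim_T f^{-1}(L) = \dim T - \dim f^{-1}(L) = \dim T' - \dim L = \codim_{T'} L. \]

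There is no serious obstacle; the only point requiring care is that the hypothesis of a \emph{split} surjection is exactly what lets us extend a ``linearizing'' coordinate system on $T'$ to one on $T$. Without a splitting, one would only be pulling back from a quotient torus and the resulting subvariety, while still defined by forms coming from $M'$, would not \emph{a priori} be linear with respect to any coordinates on $M$.
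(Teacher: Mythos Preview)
Your proof is correct and follows essentially the same approach as the paper: both use the splitting to extend a linearizing basis of $M'$ to an integral basis of $M$, so that the pulled-back linear forms remain linear in the new coordinates, and both read off the codimension from the fact that no new relations are introduced. One cosmetic point: you reuse $K$ for the kernel torus while the paper reserves $K$ for the ground field, so you should pick a different symbol.
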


\begin{proof}
Let $M$ and $M'$ denote the character lattices of $T$ and $T'$, respectively. The surjection $f$ is induced by an injective map on monomials $f^* \colon M' \to M$, which is split because $f$ is. It follows that $f^*$ maps primitive elements to primitive elements. Thus, if $\{x_1, \ldots, x_n\}$ is an integral basis of $M'$, then $\{f^*(x_1), \ldots, f^*(x_n)\}$ is a linearly independent set of primitive elements in $M$, and therefore may be extended to an integral basis. 

If $L$ is generated by linear forms in the $x_i$, then pulling back along $f^*$, we see that the ideal of $f^{-1}(L)$ is generated by linear forms in the $f^*(x_i)$. Moreover, injectivity of $f^*$ guarantees that the ideal of $f^{-1}(L)$ is generated by the same number of independent linear forms as the ideal of $L$, proving that $\codim_T f^{-1}(L) = \codim_{T'} L$.
\end{proof}


Consider $\TT = \RR \cup \{ \infty \}$, a monoid under addition with the topology of a half-open interval. For a cone $\sigma \in \Delta$, we define $\overline{N}_{\RR}^{\sigma}$ to be the set of monoid homomorphisms $\Hom(\sigma^{\vee} \cap M, \TT)$. We give $\overline{N}_{\RR}^{\sigma}$ the topology of pointwise convergence. If $\tau \prec \sigma$, then $\overline{N}_{\RR}^{\tau}$ is naturally identified with the open subset of $\overline{N}_{\RR}^{\sigma}$ consisting of maps which are finite on $\tau^{\perp} \cap \sigma^{\vee} \cap M$ (in particular, $N_{\RR} =\overline{N}_{\RR}^{\{0\}}$ is an open subset of each $\overline{N}_{\RR}^{\sigma}$). Gluing along these identifications, we obtain $\overline{N}_{\RR}^{\Delta}$, a partial compactification of $N_{\RR}$. This mirrors the construction of the toric variety $Y_{\Delta}$: we have a decomposition $\overline{N}_{\RR}^{\Delta} = \cup_{\sigma \in \Delta} \overline{N}_{\RR}^{\sigma}$ analogous to the decomposition of $Y_{\Delta}$ into affine toric varieties, and a decomposition $\overline{N}_{\RR}^{\Delta} = \sqcup_{\sigma \in \Delta} N(\sigma)$ (as sets) analogous to the decomposition of $Y_{\Delta}$ into torus orbits. These two constructions are related by the process of tropicalization.


Given a $K$-variety $X$, the analytification functor assigns to $X$ a \textbf{Berkovich analytic space}, denoted $X^{\an}$ \cite[Sections 3.4 and 3.5]{B90}. This is a topological space equipped with a sheaf of analytic functions. For our purposes, it will be enough to describe the topology of $X^{\an}$ in the affine case. If $X = \Spec A$ is affine, then $X^{\an}$ is identified with the set of ring valuations $A \to \TT$ extending the valuation on $K$. This space is equipped with the coarsest topology such for every $a \in A$, the evaluation map $\ev_a \colon X^{\an} \to \TT$, $\nu \mapsto \nu(a)$ is continuous.

Let the torus $T$ and fan $\Delta$ be as above. The \textbf{tropicalization map} on the torus is the continuous surjection
\[ \trop \colon T^{\an} \to N_{\RR} \]
which takes a valuation $w \colon K[M] \to \TT$ to its restriction $w|_M \colon M \to \RR$. More generally, for a cone $\sigma \in \Delta$, we have a \textbf{tropicalization map}
\[ \trop \colon Y_{\sigma}^{\an} \to \overline{N}_{\RR}^{\sigma} = \Hom(\sigma^{\vee} \cap M, \TT) \]
which similarly takes a valuation $w \colon K[\sigma^{\vee} \cap M] \to \TT$ to its restriction to $\sigma^{\vee} \cap M$. These maps glue to give a tropicalization map $\trop \colon Y_{\Delta}^{\an} \to \overline{N}_{\RR}^{\Delta}$. This map is a continuous and proper surjection, which has the property that its restriction to each torus orbit is the usual tropicalization $O(\sigma)^{\an} \to N_{\RR}(\sigma)$ for a torus. 

Given a closed subvariety $X \subseteq Y_{\Delta}$, the \textbf{tropicalization} $\Trop(X) \subseteq \overline{N}_{\RR}^{\sigma}$ of $X$ is the image of $X^{\an} \subseteq Y^{\an}$ under $\trop$. If $X$ is a subvariety of a torus, then $\Trop(X)$ may be given the structure of a finite polyhedral complex, which is a (not necessarily pointed) fan if $X$ is defined over a subfield of $K$ having trivial valuation. Moreover, this polyhedral complex is of pure dimension $\dim X$ and is equipped with a positive integer-valued weight function, the tropical multiplicty, with respect to which the complex is balanced \cite[Theorem 3.3.5]{MS15}.

In general, when $X$ is a subvariety of a toric variety, $\Trop(X)$ may be computed orbit-by-orbit: $\Trop(X) \cap N_{\RR}(\sigma) = \Trop(X \cap O(\sigma))$. Thus, $\Trop(X)$ is a partial compactification of the balanced polyhedral complex $\Trop(X \cap T)$ by lower-dimensional finite polyhedral complexes. If $X = \overline{X \cap T}$ (in particular, if $X$ is irreducible and $X \cap T$ is nonempty), then $\Trop(X)$ is the closure of $\Trop(X \cap T)$ in $\overline{N}_{\RR}^{\sigma}$.

Tropicalization is functorial with respect to morphisms of toric varieties. Let $f \colon Y_{\Delta} \to Y_{\Delta'}$ be such a map. For $\sigma \in \Delta$, there exists $\sigma' \in \Delta'$ such that $f_*(\sigma) \subseteq \sigma'$. For such a $\sigma'$, the restriction of $f^*$ gives a map $M(\sigma') \to M(\sigma)$, inducing $\overline{N}_{\RR}^{\sigma} \to \overline{N}_{\RR}^{\sigma'}$. These maps glue to give a map $\overline{N}_{\RR}^{\Delta} \to \overline{N}_{\RR}^{\Delta'}$, denoted $\Trop(f)$. See \cite{P09} for details. By \cite[Corollary 6.2.17]{MS15}, if $X \subseteq Y_{\Delta}$, then $\Trop(f)(\Trop(X)) = \Trop(f(X))$.

\begin{ex}
Of particular importance to us will be the tropicalization of a linear space. Given a hyperplane arrangement $\cA$, we obtain a linear subspace $L_{\cA_0} \subseteq \AA^E = \Spec K[x_e \mid e \in E]$, which depends only on the centralization $\cA_0$. For each relation $\sum_{e \in E} c_e a_e = 0$ in $N$, we have the corresponding linear form $\sum_{e \in E} c_e x_e$, and $L_{\cA_0}$ is cut out by the ideal generated by these forms. Note that $\dim L_{\cA_0} = \rk \cA_0$. Every linear subspace of $\AA^E$ which is not contained in a coordinate subspace arises from a central arrangement in this way (the condition that $L_{\cA_0}$ not lie in a coordinate hyperplane is equivalent to the matroid of $\cA_0$ being loop-free).

The torus orbits of $\AA^E$ are indexed by subsets $S \subseteq E$, where $S$ corresponds to the torus $\GG_m^{E \setminus S}$ defined by $x_e = 0$ if and only if $e \in S$. We note that $L_{\cA_0} \cap \GG_m^{E \setminus S}$ is nonempty if and only if $S$ is a flat of $\cA_0$, in which case it is the intersection of $L_{\cA_0^S} \subseteq \AA^{E \setminus S}$ with $\GG_m^{E \setminus S}$.

Given a flat $F$ of $\cA_0$, we define $\delta_{F} = \sum_{e \in F} \delta_e \in \RR^E \subseteq \TT^E$, where $\delta_e \in \RR^E$ is the basis vector corresponding to $e \in E$. For a flag $\cF$ of flats
\[ \emptyset = F_0 \subset F_1 \subset \cdots \subset F_{k-1} \subset F_k = E, \]
we have the $\ell(k)$-dimensional cone
\[ \beta_{\cF} = \RR_{\geq 0}\langle \delta_{F_1}, \cdots, \delta_{F_{k-1}}, \pm \delta_E \rangle \subseteq \RR^E. \]
The collection of cones $\beta_{\cF}$, for $\cF$ a flag of flats of $\cA_0$, defines a fan in $\RR^E$, called the \textbf{Bergman fan} of $\cA_0$ (with the \emph{fine} fan structure of \cite{AK06}). The Bergman fan of $\cA_0$ is a pure polyhedral fan of dimension $\rk \cA_0$. We have $\beta_{\cF} \prec \beta_{\cF'}$ if and only if $\cF'$ is a refinement of $\cF$. The Bergman fan is balanced with respect to the weight function which assigns each maximal cone weight one, and its support is $\Trop(L_{\cA_0} \cap \GG_m^E)$. 

Note that every cone $\beta_{\cF}$ contains the diagonal copy of $\RR$, spanned by $\delta_E$. Many authors define the Bergman fan to be the image of $\Trop(L_{\cA_0} \cap \GG_m^E)$ in the quotient of $\RR^E$ by the diagonal. We shall not adopt this convention.

The linear space $L_{\cA_0}$ intersects the torus orbit $\GG_m^{E \setminus S}$ if and only if $S$ is a flat of $\cA$, in which case its tropicalization $\Trop(L_{\cA} \cap \GG_m^{E \setminus S})$ is identified with the Bergman fan of the restriction $\cA_0^S$. Its cones are denoted $\beta^{(S)}_{\cF}$, for $\cF$ a flag of flats in $\cA_0^S$. The full tropicalization $\Trop(L_{\cA})$, together with the fan structures on its individual strata, is the \textbf{extended Bergman fan} of $\cA$.
\end{ex}

\section{Lawrence toric varieties} \label{sec:ltv}

Let $a$ be a primitive spanning configuration, and let $\cA = \cA(a,r)$ be an arrangement in $M_{\RR}$. We continue to let $T = \Spec K[M]$ be the torus with character lattice $M$. The configuration $a$ defines a surjection $\ZZ^E \to N$ taking the coordinate vector $\delta_e$ to $a_e$. Let $\Lambda$ be the kernel of this map.

Consider the antidiagonal embedding $\nabla \colon \ZZ^E \to \ZZ^E \oplus \ZZ^E$. If we denote by $\delta_e^+$ the generators of the first copy of $\ZZ^E$ and by $\delta_e^-$ the generators of the second copy of $\ZZ^E$, then $\nabla(\delta_e) = \delta_e^+ - \delta_e^-$. We obtain a commutative diagram
\begin{equation} \label{eq:sescochar}
\begin{tikzcd}
	0
		\ar{r} &
	\Lambda
		\ar{d}{\id}
		\ar{r}{\iota} &
	\ZZ^E
		\ar{d}{\nabla}
		\ar{r}{a} &
	N
		\ar{d}
		\ar{r} &
	0 \\
	0
		\ar{r} &
	\Lambda
		\ar{r}{\iota_{\nabla}} &
	\ZZ^E \oplus \ZZ^E
		\ar{r} &
	\widetilde{N}
		\ar{r} &
	0
\end{tikzcd}
\end{equation}
where the rows are exact. Here, $\widetilde{N}$ is the cokernel of $\iota_{\nabla} = \nabla \circ \iota$. Let $\rho_e^+$ and $\rho_e^-$ be the images in $\widetilde{N}$ of the generators $\delta_e^+$ and $\delta_e^-$, respectively. Then $\widetilde{N}$ is a free abelian group of rank $|E| + d$, generated by the elements $\rho_e^{\pm}$. For each relation $\sum_{e \in E} c_e a_e = 0$ in $N$, we have the relation
\[ \sum_{e \in E} c_e (\rho_e^+ - \rho_e^-) = 0 \]
in $\widetilde{N}$, and these are all of the relations satisfied by the elements $\rho_e^{\pm}$.

Let $\widetilde{M}$ be the dual lattice to $\widetilde{N}$. By dualizing \eqref{eq:sescochar}, we have that $\widetilde{M}$ is a rank $d + |E|$ sublattice of $\ZZ \langle x_e^{\pm} \mid e \in E\rangle$, where $\{x_e^{\pm}\}$ is the basis dual to $\{\delta_e^{\pm}\}$. The torus $\widetilde{T} = \Spec K[\widetilde{M}]$ is the \textbf{Lawrence torus}. The arrangement $\cA$ defines a $\widetilde{T}$-toric variety $\fB_{\cA}$, called the \textbf{Lawrence toric variety} of $\cA$. In the literature, $\fB_{\cA}$ is traditionally identified as the GIT quotient of $T^*\AA^E \cong \AA^E \times \AA^E$ by $G = \Spec K[\Lambda^*]$ with respect to the character $\alpha = \iota^*(r) \in \Lambda^*$.

For our purposes, we will need an explicit description of the fan $\Delta_{\cA}$ of $\fB_{\cA}$. For any subset $S \subseteq E$ and face $\cR$ of the localization $\cA_S$, we define $\sigma_{S, \cR}$ to be the cone in $\widetilde{N}_{\RR}$ whose rays are generated by the integral vectors
\[ \{ \rho_e^+ \mid e \in S^+(\cR)\} \cup \{ \rho_f^- \mid f \in S^-(\cR)\}. \]
Hausel and Sturmfels \cite[Proposition 4.3]{HS02} proved that the cones $\sigma_{E, \xi}$, for $\xi$ a vertex of $\cA$, are precisely the maximal cones of $\Delta_{\cA}$, and hence $\Delta_{\cA}$ consists of these cones together with all of their faces. We show that these faces are precisely the remaining cones $\sigma_{S, \cR}$.

\begin{prop} \label{prop:cones}
The Lawrence fan is the set of cones
\[ \Delta_{\cA} = \{ \sigma_{S, \cR} \mid \text{$S \subseteq E$ and $\cR$ is a face of $\cA_S$} \}, \]
with face relations
\[ \sigma_{S', \cR'} \prec \sigma_{S, \cR} \quad \text{if and only if} \quad \text{$S \supseteq S'$ and $\cR \subseteq \overline{\cR'}$.} \]
Moreover, $\dim \sigma_{S, \cR} = |S| + \codim \cR$.
\end{prop}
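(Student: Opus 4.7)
The plan builds on Hausel--Sturmfels's identification of the maximal cones of $\Delta_{\cA}$ as the $\sigma_{E, \xi}$ indexed by vertices $\xi$ of $\cA$. I will show that every face of $\sigma_{E, \xi}$ has the form $\sigma_{S, \cR}$ and conversely that every $\sigma_{S, \cR}$ is a face of some $\sigma_{E, \xi}$, establishing the claimed bijection.

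A face of $\sigma_{E, \xi}$ is generated by some subset of its rays, which we write as $\{\rho_e^+ : e \in A\} \cup \{\rho_f^- : f \in B\}$ with $A \subseteq E^+(\xi)$ and $B \subseteq E^-(\xi)$. Setting $S = A \cup B$, the sign pattern that is $+$ on $A \setminus B$, $-$ on $B \setminus A$, and $0$ on $A \cap B$ determines, whenever realized by some $m \in M_{\RR}$, a face $\cR$ of $\cA_S$ with $S^+(\cR) = A$ and $S^-(\cR) = B$; the cone on the chosen rays is then precisely $\sigma_{S, \cR}$. Realizability comes from perturbing $\xi$ along a direction $v \in M_{\RR}$ with prescribed signs of $\{\langle v, a_e\rangle : e \in E^0(\xi)\}$. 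In the non-simple case where $\{a_e : e \in E^0(\xi)\}$ is linearly dependent, the required $v$ exists precisely because $(A, B)$ comes from a genuine face of $\sigma_{E, \xi}$: the relations among the rays of $\sigma_{E, \xi}$ (coming from $\Lambda$) coincide with the dependencies among $\{a_e : e \in E^0(\xi)\}$, so the face condition forces compatibility of the prescribed signs with each such relation.

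Conversely, given $(S, \cR)$, I must exhibit $\sigma_{S, \cR}$ as a face of some $\sigma_{E, \xi}$. By Lemma \ref{lem:clface}, it suffices to find a vertex $\xi$ of $\cA$ in $\overline{\cR}$. Take a minimal-dimensional face $\cR^*$ of $\cA$ contained in the closed set $\overline{\cR}$: if $\dim \cR^* \geq 1$, then $\rk E^0(\cR^*) < d = \rk E$ forces some $e' \notin E^0(\cR^*)$ with $a_{e'} \notin \Span\{a_f : f \in E^0(\cR^*)\}$, and relaxing the strict sign at $e'$ to $0$ produces a strictly lower-dimensional face of $\cA$ lying in $\overline{\cR^*} \subseteq \overline{\cR}$, contradicting minimality. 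Hence $\dim \cR^* = 0$ and $\cR^*$ is the desired vertex.

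The face relation then follows directly: since $\Delta_{\cA}$ is a fan, $\sigma_{S', \cR'} \prec \sigma_{S, \cR}$ iff $\sigma_{S', \cR'} \subseteq \sigma_{S, \cR}$, iff the rays of the former lie among those of the latter, iff ${S'}^+(\cR') \subseteq S^+(\cR)$ and ${S'}^-(\cR') \subseteq S^-(\cR)$; by the natural extension of Lemma \ref{lem:clface} to pairs of localizations, this translates to $S' \subseteq S$ and $\cR \subseteq \overline{\cR'}$. For the dimension, $\sigma_{S, \cR}$ has $|S^+(\cR)| + |S^-(\cR)| = |S| + |S^0(\cR)|$ ray generators subject to $|S^0(\cR)| - \rk S^0(\cR)$ independent linear relations coming from elements of $\Lambda$ supported on $S^0(\cR)$, so $\dim \sigma_{S, \cR} = |S| + \rk S^0(\cR) = |S| + \codim \cR$ via \eqref{eq:codimface}. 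The main obstacle is the non-simple case, where $\sigma_{E, \xi}$ fails to be simplicial and not every subset of its rays generates a face; verifying that face-generating subsets correspond to exactly the realizable sign patterns on $\cA_S$ requires the careful matching of relations outlined above.
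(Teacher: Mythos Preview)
Your overall strategy matches the paper's: lean on Hausel--Sturmfels for the maximal cones $\sigma_{E,\xi}$ and then identify all faces combinatorially. The dimension count is also essentially the same. But there are two places where your argument is incomplete and the paper supplies a sharper, more explicit tool.

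First, in your forward direction (face of $\sigma_{E,\xi}$ yields some $\sigma_{S,\cR}$), you sketch realizability of the sign pattern by ``perturbing $\xi$'' and matching relations among the $a_e$ with relations among the $\rho_e^\pm$. The paper avoids this entirely by using the supporting functional itself: if $\tau = u^\perp \cap \sigma_{S,\cR}$ with $u \in \sigma_{S,\cR}^\vee$, then for any $p \in \cR$ the point $m_\epsilon = \nabla^*(\epsilon u) + p$ lies in $M_\RR$ and, for small $\epsilon$, realizes exactly the required sign pattern in $\cA_{A\cup B}$. This is uniform and requires no separate treatment of the non-simple case you flag as the main obstacle.

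Second, and more seriously, your converse direction has a genuine gap. Finding a vertex $\xi \in \overline{\cR}$ and invoking Lemma~\ref{lem:clface} gives only $S^\pm(\cR) \subseteq E^\pm(\xi)$, hence $\sigma_{S,\cR} \subseteq \sigma_{E,\xi}$; it does \emph{not} show that $\sigma_{S,\cR}$ is a \emph{face}. You cannot yet appeal to the fan property (``containment of cones in a fan implies face''), because at this stage you have not established that $\sigma_{S,\cR}$ belongs to $\Delta_\cA$---that is exactly what you are trying to prove, so the argument is circular. The paper closes this by constructing, for any $S' \subseteq S$ and $\cR \subseteq \overline{\cR'}$, an explicit $u \in \sigma_{S,\cR}^\vee \cap \widetilde{M}_\RR$ with $u^\perp \cap \sigma_{S,\cR} = \sigma_{S',\cR'}$; the formula for $u$ is assembled from a chosen $m \in \cR'$ and $p \in \cR$. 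This supporting-hyperplane construction is precisely the missing ingredient, and once you have it the face relation follows directly without passing through your vertex-finding argument at all.
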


\begin{proof}
Suppose $\tau \prec \sigma_{S, \cR}$ is a face for some $S \subseteq E$ and face $\cR$ of $\cA_S$. Then $\tau = u^{\perp} \cap \sigma_{S, \cR}$ for some $u \in \sigma_{S, \cR}^{\vee} \subseteq \widetilde{M}_{\RR}$. We also know that $\tau$ is generated by a subset of the rays of $\sigma_{S, \cR}$. That is, $\tau$ is the cone generated by
\[ \{\rho_i^+ \mid i \in A \} \cup \{\rho_j^- \mid j \in B \} \]
for some subsets $A \subseteq S^+(\cR)$ and $B \subseteq S^-(\cR)$. 

Fix any point $p \in \cR$, and set
\[ m_{\epsilon} = \nabla^*(\epsilon u) + p \in M_{\RR}, \]
for $\epsilon > 0$. For a sufficiently small choice of $\epsilon$, we have

\[ \sgn_{\cA_{A \cup B}}(m_{\epsilon})_e = \begin{cases}
+ & \text{if $e \in A \setminus (A \cap B)$}, \\
0 & \text{if $e \in A \cap B$}, \\
- & \text{if $e \in B \setminus (A \cap B)$}.
\end{cases} \]
In other words, $m_{\epsilon}$ lies in a face $\cR'$ of $\cA_{A \cup B}$ satisfying $(A \cup B)^+(\cR') = A$ and $(A \cup B)^-(\cR') = B$. This shows that $\tau = \sigma_{A \cup B, \cR'}$. Since $A \subseteq S^+(\cR)$ and $B \subseteq S^-(\cR)$, we have $\cR \subseteq \overline{\cR'}$ by Lemma \ref{lem:clface}.

Conversely, let $S' \subseteq S$ and let $\cR'$ be a face of $\cA_{S'}$ with $\cR \subseteq \overline{\cR'}$. We shall show that $\sigma_{S', \cR'}$ is a face of $\sigma_{S, \cR}$.

By Lemma \ref{lem:clface}, we have $S^+(\cR) \supseteq S'^+(\cR')$ and $S^-(\cR) \supseteq S'^-(\cR')$. Fix some $p \in \cR$ and $m \in \cR'$, and set
\[ u = \sum_{i \in S'^{-}(\cR')} \langle m - p, a_i \rangle x_i^+ - \sum_{j \in S'^{+}(\cR') \setminus S'^0(\cR')} \langle m - p, a_j \rangle x_j^- + \sum_{k \in S \setminus S'} (c_kx_k^+ - d_kx_k^-), \]
where $c_k$ and $d_k$ are positive real numbers chosen so that $c_k - d_k = \langle m - p, a_k \rangle$ for every $k \in S \setminus S'$. A priori, $u \in \RR \langle x_e^{\pm} \rangle$. However, it is easy to verify that
\[ \langle \nabla^* (u), \delta_e \rangle = \langle u, \nabla(\delta_e) \rangle = \langle u, \delta_e^+ - \delta_e^- \rangle \]
is equal to $\langle m - p, a_e \rangle$ for all $e \in S$. Since $\{a_e, e \in E\}$ spans $N_{\RR}$, it follows that $\nabla^*(u) = m - p \in M_{\RR}$, and therefore $u \in \widetilde{M}_{\RR}$. By design, we have $u \in \sigma_{S, \cR}^{\vee}$ and $u^{\perp} \cap \sigma_{S, \cR} = \sigma_{S', \cR'}$, proving that $\sigma_{S', \cR'} \prec \sigma_{S, \cR}$.


We now calculate the dimension of $\sigma_{S, \cR}$, which is equal to the dimension of the real vector space $\langle \sigma_{S, \cR} \rangle \subseteq \widetilde{N}_{\RR}$ spanned by it. Define
\[ V_1 = \RR \langle \rho_i^+, \rho_j^- \mid i \in S^+(\cR) \setminus S^0(\cR), j \in S^-(\cR) \setminus S^0(\cR) \rangle \]
and
\[ V_2 = \RR \langle \rho_i^+, \rho_j^- \mid i, j \in S^0(\cR) \rangle. \]
Then we clearly have $\langle \sigma_{S, \cR} \rangle = V_1 + V_2$.

Since every relation among the elements $\rho_e^{\pm}$ is of the form
\[ \sum_{e \in E} c_e(\rho_e^+ - \rho_e^-) = 0, \]
any nontrivial linear dependence among the generators of $\sigma_{S, \cR}$ must occur among the vectors $\{ \rho_i^+, \rho_j^- \mid i, j \in S^0(\cR) \}$. It follows that $\dim V_1 = |S \setminus S^0(\cR)|$ and $\langle \sigma_{S, \cR} \rangle = V_1 \oplus V_2$.

By \eqref{eq:codimface}, we have $\codim \cR = \rk (\cA_{S^0(\cR)})_0$. Choose any basis $Q \subseteq S^0(\cR)$ of the matroid of the arrangement $(\cA_{S^0(\cR)})_0$. We claim that
\begin{equation} \label{eq:V2basis}
\{ \rho_i^+, \rho_j^- \mid i \in Q, j \in S^0(\cR) \}
\end{equation}
is a basis for $V_2$. Indeed, any nontrivial linear dependence among these generators must occur among the subset $\{ \rho_i^+, \rho_j^- \mid i, j \in Q \}$, but any such dependence must be trivial because $Q$ is independent. Thus, the set in \eqref{eq:V2basis} is linearly independent. On the other hand, for any $i \in S^0(\cR) \setminus Q$, we can write $a_i = \sum_{q \in Q} d_q a_q$ for some $d_q \in \ZZ$. This implies
\[ \rho_i^+ = \rho_i^- + \sum_{q \in Q} d_q(\rho_q^+ - \rho_q^-), \]
and therefore the set in \eqref{eq:V2basis} spans $V_2$. We conclude that
\[ \dim V_2 = |Q| + |S^0(\cR)| = \codim \cR + |S^0(\cR)|, \]
and hence
\[ \dim \sigma_{S, \cR} = \dim V_1 + \dim V_2 = \codim \cR + |S|. \]

Finally, we observe that $\sigma_{S, \cR}$ has maximal dimension $d + |E|$ if and only if $S = E$ and $\cR$ is a vertex of $\cA$. Thus, we have shown that the fan consisting of these maximal cones together with all of their faces is
\[ \{\sigma_{S, \cR} \mid \text{$S \subseteq E$ and $\cR$ is a face of $\cA_S$}\}. \]
By \cite{HS02}, this fan is $\Delta_{\cA}$.
\end{proof}

\begin{rem}
In \cite{HS02}, the maximal cones of $\Delta_{\cA}$ are indexed (non-uniquely, unless $\cA$ is simple) by bases of the Gale dual of $\cA_0$. Given such a basis $\cB \subseteq E$, we obtain a vertex $\xi$ of $\cA$ by intersecting the hyperplanes $H_e$ for $e$ in the dual basis $E \setminus \cB$. The maximal cone Hausel and Sturmfels label with the basis $\cB$ is thus identified with our cone $\sigma_{E, \xi}$.

\end{rem}





\section{Hypertoric varieties} \label{sec:htv}



Consider the surjection $\widetilde{N} \to \ZZ^E$, given by $\rho_e^{\pm} \mapsto \delta_e$ (this is the cokernel of the antidiagonal embedding $N \to \widetilde{N}$ in \eqref{eq:sescochar}). Tensoring with $\RR$, we obtain a linear map $\widetilde{N}_{\RR} \to \RR^E$. Under this surjection, $\sigma_{S, \cR}$ is mapped onto the cone $\RR^S_{\geq 0}$. We thus obtain a surjective map of toric varieties $\Phi \colon \fB_{\cA} \to \AA^E$.

We define the \textbf{hypertoric variety} of the arrangement $\cA$, denoted $\fM_{\cA}$, to be the preimage of the linear space $L_{\cA_0}$ under the map $\Phi$. It is irreducible of dimension $2d$.

\begin{rem}
If $K = \CC$, the complex points of $\AA^E/L_{\cA_0}$ can be identified with the dual Lie algebra of the torus $G$, and the composition of $\Phi$ with the projection $\AA^E \to \AA^E/L_{\cA_0}$ is then the moment map $\mu$ for the hamiltonian action of $G$ on $T^*\AA^E \cong \AA^E \times \AA^E$. This endows $\fM_{\cA} = \mu^{-1}(0) \sslash_{\alpha} G$ with a canonical Poisson structure. Arbo and Proudfoot have proved that this Poisson structure makes $\fM_{\cA}$ a symplectic variety in the sense of Beauville \cite[Proposition 4.14]{AP16}. The torus $T$ acts on $\fM_{\cA}$ via its antidiagonal embedding in the Lawrence torus $\widetilde{T}$. This action is hamiltonian with moment map $\Phi|_{\fM_{\cA}}$.
\end{rem}

By \cite[Theorems 3.2 \& 3.3]{BD00}, the hypertoric variety $\fM_{\cA}$ has at worst orbifold singularities if and only if the arrangement $\cA$ is simple, and is smooth if $\cA$ is smooth. The variety $\fM_{\cA_0}$ is affine, and if $\cA$ is simple then $\fM_{\cA} \to \fM_{\cA_0}$ is an orbifold resolution of singularities.

If $\cA$ is the arrangement of coordinate hyperplanes in $M_{\RR}$, then the associated hypertoric variety is $\fM_{\cA} \cong T^*\AA^d \cong \AA^{2d}$. The cotangent bundles of products of projective spaces may also be realized as hypertoric varieties. The polytope of $\PP^d$ is a $d$-simplex in $M_{\RR}$ cut out by $d+1$ affine hyperplanes. The hypertoric variety associated to the arrangement consisting of these hyperplanes is isomorphic to $T^*\PP^d$. The same procedure realizes the cotangent bundle of a product of projective spaces as a hypertoric variety. In general, if $Y$ is a projective toric variety with polyhedron $P$, then the arrangement of hyperplanes cutting out $P$ defines a hypertoric variety which contains $T^*Y$ as a dense open subset \cite[Theorem 7.1]{BD00}.

The affine space $\AA^E$ has torus orbits $\GG_m^{E \setminus S}$, indexed by the set $S \subseteq E$ of coordinates which vanish. The preimage $\Phi^{-1}(\GG_m^{E \setminus S})$ is the disjoint union (as sets) of all $\widetilde{T}$-orbits $O(\sigma_{S, \cR})$ for $\cR$ a face of $\cA_S$. The restriction of $\Phi$ to an orbit $O(\sigma_{S, \cR})$ is a surjection onto $\GG_m^{E \setminus S}$, which is split because $\widetilde{T} \to \GG_m^E$ is split.

\begin{prop} \label{prop:linear}
Let $S \subseteq E$ be a subset and let $\cR$ be a face of $\cA_S$. The intersection $\fM_{\cA} \cap O(\sigma_{S, \cR})$ is nonempty if and only if $S$ is a flat of $\cA_0$, in which case it is a linear subvariety of $O(\sigma_{S, \cR})$ of dimension $2d - \rk S - \codim \cR$. In particular, $\fM_{\cA} \cap O(\sigma_{S, \cR})$ is irreducible.
\end{prop}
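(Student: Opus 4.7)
The plan is to transport the intersection through $\Phi$ and invoke Lemma \ref{lem:linear}. Since $O(\sigma_{S, \cR}) \subseteq \Phi^{-1}(\GG_m^{E \setminus S})$, the definition $\fM_{\cA} = \Phi^{-1}(L_{\cA_0})$ gives
\[ \fM_{\cA} \cap O(\sigma_{S, \cR}) \;=\; \bigl(\Phi|_{O(\sigma_{S, \cR})}\bigr)^{-1}\!\bigl(L_{\cA_0} \cap \GG_m^{E \setminus S}\bigr), \]
and the restriction $\Phi|_{O(\sigma_{S, \cR})} \colon O(\sigma_{S, \cR}) \to \GG_m^{E \setminus S}$ is already known to be a split surjection of tori.

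From the example on the Bergman fan, $L_{\cA_0} \cap \GG_m^{E \setminus S}$ is nonempty if and only if $S$ is a flat of $\cA_0$, in which case it coincides with $L_{\cA_0^S} \cap \GG_m^{E \setminus S}$ and is a linear subvariety of $\GG_m^{E \setminus S}$ of dimension $\rk \cA_0^S = d - \rk S$, hence of codimension $|E| - |S| - d + \rk S$ in $\GG_m^{E \setminus S}$. The emptiness assertion for non-flats $S$ is then immediate.

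When $S$ is a flat, Lemma \ref{lem:linear} applies directly: the defining equations of $L_{\cA_0^S} \cap \GG_m^{E \setminus S}$ are linear in the standard torus coordinates on $\GG_m^{E \setminus S}$, so the preimage $\fM_{\cA} \cap O(\sigma_{S, \cR})$ is a linear subvariety of $O(\sigma_{S, \cR})$ of the same codimension $|E| - |S| - d + \rk S$. A nonempty linear subvariety of a torus is the intersection of that torus with an affine linear subspace of ambient affine space, hence irreducible. Combining with $\dim O(\sigma_{S, \cR}) = (d + |E|) - (|S| + \codim \cR)$ from Proposition \ref{prop:cones}, a direct subtraction yields $\dim\bigl(\fM_{\cA} \cap O(\sigma_{S, \cR})\bigr) = 2d - \rk S - \codim \cR$. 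The argument is essentially bookkeeping; the substantive content is entirely absorbed into Lemma \ref{lem:linear} and the structural observations preceding the statement, so there is no real obstacle to overcome.
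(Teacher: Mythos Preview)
Your proof is correct and follows essentially the same approach as the paper: both pull back $L_{\cA_0} \cap \GG_m^{E \setminus S}$ along the split surjection $\Phi|_{O(\sigma_{S,\cR})}$, invoke Lemma~\ref{lem:linear} for linearity and preservation of codimension, and then read off the dimension from Proposition~\ref{prop:cones}. Your explicit remark on irreducibility is a small bonus the paper leaves implicit.
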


\begin{proof}
Since $\fM_{\cA} \cap O(\sigma_{S, \cR})$ is the preimage of $L_{\cA_0} \cap \GG_m^{E \setminus S}$ under the surjection $\Phi|_{O(\sigma_{S, \cR})} \colon O(\sigma_{S, \cR}) \to \GG_m^{E \setminus S}$, we have that $\fM_{\cA} \cap O(\sigma_{S, \cR}) \neq \emptyset$ if and only if $L_{\cA_0} \cap \GG_m^{E \setminus S} \neq \emptyset$, and this occurs if and only if $S$ is a flat of $\cA_0$.

Suppose now that $S$ is a flat of $\cA_0$. Then $L_{\cA_0} \cap \GG_m^{E \setminus S}$ is a linear subvariety of $\GG_m^{E \setminus S}$ and $\Phi|_{O(\sigma_{S, \cR})}$ is a split surjection. By Lemma \ref{lem:linear}, $\fM_{\cA} \cap O(\sigma_{S, \cR})$ is a linear subvariety of $O(\sigma_{S, \cR})$ of codimension equal to the codimension of $L_{\cA_0} \cap \GG_m^{E \setminus S}$ in $\GG_m^{E \setminus S}$. Since $\dim (L_{\cA_0} \cap \GG_m^{E \setminus S}) = \rk \cA_0^S = d - \rk S$, we have
\[ \codim_{O(\sigma_{S, \cR})} (\fM_{\cA} \cap O(\sigma_{S, \cR})) = |E \setminus S| - d + \rk S. \]
By Proposition \ref{prop:cones},
\begin{align*}
\dim (\fM_{\cA} \cap O(\sigma_{S, \cR}))
	&= \dim O(\sigma_{S, \cR}) - (|E \setminus S| - d + \rk S) \\
	&= (d - \codim \cR + |E \setminus S|) - (|E \setminus S| - d + \rk S) \\
	&= 2d - \rk S - \codim \cR.
\end{align*}
\end{proof}

In general, if $X$ is a subvariety or a toric variety, then its intersection with a torus orbit corresponding to a cone $\sigma$ has expected dimension $\dim X - \dim \sigma$. If the intersection has the expected dimension, we say that $X$ intersects the torus orbit \textbf{properly}. By \cite[Corollary 8.15]{GRW15}, a subvariety which intersects each torus orbit either properly or not at all is automatically faithfully tropicalized. We now see that, except in trivial cases, hypertoric varieties \emph{do not} meet all torus orbits properly.

\begin{cor} \label{cor:notproper}
The hypertoric variety $\fM_{\cA}$ does not meet each torus orbit of $\fB_{\cA}$ properly unless $\fM_{\cA} \cong \AA^{2d}$.
\end{cor}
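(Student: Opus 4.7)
The plan is to convert the dimensional obstruction to proper intersection into a combinatorial condition on the matroid of $\cA_0$, and then argue that this condition is so restrictive that it forces $\fM_\cA$ to collapse to $\fB_\cA$, which in turn collapses to affine space.

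First, I would combine Proposition \ref{prop:cones} with Proposition \ref{prop:linear}. The orbit $O(\sigma_{S, \cR})$ has codimension $|S| + \codim \cR$ in $\fB_\cA$, so the expected dimension of $\fM_\cA \cap O(\sigma_{S, \cR})$ is $2d - |S| - \codim \cR$. When $S$ is a flat, Proposition \ref{prop:linear} gives actual dimension $2d - \rk S - \codim \cR$. Proper intersection at every orbit therefore forces $\rk S = |S|$ for every flat $S$ of $\cA_0$, i.e., every flat is independent in the matroid of $\cA_0$. (Non-flats contribute empty intersections, which are vacuously proper.)

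Second, I would argue that if every flat of $\cA_0$ is independent, then the matroid of $\cA_0$ is free. Indeed, were there a circuit $C$, its closure $\overline{C}$ would be a flat containing $C$ with $\rk \overline{C} = \rk C = |C| - 1 < |C| \leq |\overline{C}|$, so $\overline{C}$ would be a dependent flat, a contradiction. Since $a$ is a primitive spanning configuration whose underlying matroid is free, $\{a_e \mid e \in E\}$ must be a $\ZZ$-basis of $N$; in particular $|E| = d$.

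Third, I would explicitly identify $\fM_\cA$ with $\AA^{2d}$ under this constraint. The map $a \colon \ZZ^E \to N$ is now an isomorphism, so $\Lambda = 0$ and $\widetilde{N} \cong \ZZ^E \oplus \ZZ^E \cong \ZZ^{2d}$. There are no relations $\sum c_e a_e = 0$ and hence no defining linear forms, so $L_{\cA_0} = \AA^E$ and $\fM_\cA = \Phi^{-1}(L_{\cA_0}) = \fB_\cA$. Since the $d$ hyperplanes of $\cA$ are in general position they meet in a unique vertex $\xi$ with $E^0(\xi) = E$; by Proposition \ref{prop:cones}, $\sigma_{E, \xi}$ is a single maximal cone of dimension $2d$, generated by the $2d$ lattice vectors $\{\rho_e^{\pm} \mid e \in E\}$, which are a basis of $\widetilde{N}$. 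Hence $\fB_\cA$ is the affine toric variety of a smooth simplicial cone filling $\widetilde{N}_\RR$, so $\fB_\cA \cong \AA^{2d}$, and $\fM_\cA \cong \AA^{2d}$.

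The main obstacle is the middle step: passing from the local condition ``every flat is independent'' to the global conclusion that the matroid is free. Once freeness is established, the rest is essentially a bookkeeping exercise with the diagram \eqref{eq:sescochar} and the description of the Lawrence fan, which are forced to degenerate in the only possible way.
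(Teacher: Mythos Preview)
Your proof is correct and follows essentially the same approach as the paper: compare the expected dimension $2d - |F| - \codim\cR$ from Proposition~\ref{prop:cones} with the actual dimension $2d - \rk F - \codim\cR$ from Proposition~\ref{prop:linear}, deduce that every flat must be independent, and conclude that the matroid is free on $d$ elements. The only difference is one of detail: the paper simply asserts that the matroid is ``uniform of rank $d = |E|$'' and then invokes the example $\fM_{\cA} \cong T^*\AA^d \cong \AA^{2d}$ stated in Section~\ref{sec:htv}, whereas you supply the circuit-closure argument for freeness and unwind the Lawrence construction explicitly (observing $\Lambda = 0$, $L_{\cA_0} = \AA^E$, and $\fB_\cA \cong \AA^{2d}$).
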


\begin{proof}
Let $F$ be a flat of $\cA_0$, so that $\fM_{\cA} \cap O(\sigma_{F, \cR})$ is nonempty. By Proposition \ref{prop:cones}, the expected dimension of this intersection is
\[ \dim \fM_{\cA} - \dim \sigma_{F, \cR} = 2d - |F| - \codim \cR. \]
By Proposition \ref{prop:linear}, $\dim (\fM_{\cA} \cap O(\sigma_{S, \cR}))$ agrees with the expected dimension if and only if $|F| = \rk F$. This can only occur for every flat $F$ if the matroid of $\cA_0$ is uniform of rank $d = |E|$. This, in turn, occurs if and only if $\cA$ is (a translate of) the coordinate hyperplanes in $\RR^d$. In this case, $\fM_{\cA} \cong \AA^{2d}$.
\end{proof}

\section{Tropicalization of the hypertoric variety} \label{sec:trophtv}

We now describe the structure of $\Trop(\fM_{\cA})$, the tropicalization of the hypertoric variety $\fM_{\cA}$ induced by its canonical embedding into $\fB_{\cA}$. By functoriality of tropicalization, $\Trop(\Phi)$ gives a surjection $\Trop(\fM_{\cA}) \to \Trop(L_{\cA_0})$. Given a flat $F$ of $\cA_0$, the stratum $\Trop(L_{\cA_0} \cap \GG_m^{E \setminus F})$ of $\Trop(L_{\cA_0})$ is the Bergman fan of the restriction $\cA_0^F$. This has a polyhedral fan structure with cones $\beta^{(F)}_{\cF}$ indexed by flags $\cF$ of flats of $\cA_0^F$. Given such a flag $\cF$, let $C_{\cF}^{(F, \cR)}$ be the preimage of $\beta^{(F)}_{\cF}$ under the surjection $\Trop(\fM_{\cA} \cap O(\sigma_{F, \cR})) \to \Trop(L_{\cA_0} \cap \GG_m^{E \setminus F})$. Since every Bergman fan is balanced when every maximal cone is given weight one, each stratum $\Trop(\fM_{\cA} \cap O(\sigma_{F, \cR}))$ inherits this structure of a balanced polyhedral fan with cones $C_{\cF}^{(F, \cR)}$ and all weights equal to one. Our main theorem describes how these fans are pieced together.

\begin{thm} \label{thm:htvcones}
The tropicalization $\Trop(\fM_{\cA})$ of the hypertoric variety is the set-theoretic disjoint union of cones $C^{(F, \cR)}_{\cF}$ indexed by a flat $F$ of $\cA_0$, a face $\cR$ of the localization $\cA_F$, and a flag of flats $\cF$ of the restriction $\cA_0^F$. These cones satisfy
\[ \dim C^{(F, \cR)}_{\cF} = d + \ell(\cF) - \codim \cR. \]
This gives $\Trop(\fM_{\cA})$ the combinatorial structure of a finite polyhedral complex, under the closure relation
\begin{equation} \label{eq:htvcones}
C^{(F', \cR')}_{\cF'} \subseteq \overline{C^{(F, \cR)}_{\cF}}
\end{equation}
if and only if the following conditions hold:
\begin{itemize}
\item $F \subseteq F'$;
\item $\cR' \subseteq \overline{\cR}$;
\item $F'$ is a flat in $\cF$, and $\trunc_{F'}(\cF)$ is a refinement of $\cF'$.
\end{itemize}
Moreover, this gives each stratum $\Trop(\fM_{\cA} \cap O(\sigma_{F, \cR}))$ the structure of a polyhedral fan, which is balanced when all cones are given weight one.
\end{thm}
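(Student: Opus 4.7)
The plan is to work stratum by stratum: by Proposition~\ref{prop:linear}, only orbits $O(\sigma_{F, \cR})$ with $F$ a flat of $\cA_0$ contribute to $\Trop(\fM_{\cA})$, and in each such orbit, $\fM_{\cA} \cap O(\sigma_{F, \cR})$ is the preimage of $L_{\cA_0} \cap \GG_m^{E \setminus F}$ under the split surjective toric morphism $\Phi|_{O(\sigma_{F, \cR})}$. I would invoke the standard fact that tropicalization commutes with preimages under split surjective maps of tori to deduce that $\Trop(\fM_{\cA} \cap O(\sigma_{F, \cR}))$ is the preimage of the Bergman fan of $\cA_0^F$ under the induced linear surjection on cocharacter spaces, whose kernel has dimension $d - \codim \cR$ by Propositions~\ref{prop:cones} and~\ref{prop:linear}. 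Since $C^{(F, \cR)}_{\cF}$ is by definition the preimage of the $\ell(\cF)$-dimensional cone $\beta^{(F)}_{\cF}$, its dimension is $\ell(\cF) + (d - \codim \cR)$. Balancedness with all-one weights transfers to each stratum from the Bergman fan via the same preimage description.

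For the closure relation I would argue as follows. The first two conditions amount to the Lawrence-fan face relation $\sigma_{F, \cR} \prec \sigma_{F', \cR'}$, which by Proposition~\ref{prop:cones} is exactly $F \subseteq F'$ and $\cR' \subseteq \overline{\cR}$; this is precisely the necessary condition for $N_{\RR}(\sigma_{F', \cR'})$ to meet the closure of $N_{\RR}(\sigma_{F, \cR})$ inside $\overline{N}_{\RR}^{\Delta_{\cA}}$. Granted these, I extract the third condition by pushing forward along $\Trop(\Phi)$ (functoriality) and invoking the combinatorics of the extended Bergman fan: the closure of $\beta^{(F)}_{\cF}$ meets the stratum indexed by $F' \supseteq F$ if and only if $F' \in \cF$, in which case the intersection is the single cone $\beta^{(F')}_{\trunc_{F'}(\cF)}$, whose sub-cones are the $\beta^{(F')}_{\cF'}$ with $\trunc_{F'}(\cF)$ refining $\cF'$. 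Pulling this back along $\Trop(\Phi|_{O(\sigma_{F', \cR'})})$ delivers the third bullet. For the converse, given a triple satisfying all three conditions, I would exhibit an explicit one-parameter family in $C^{(F, \cR)}_{\cF}$---obtained by scaling the ray direction for $F'$ to infinity while deforming a base-point of $\cR$ toward the face $\cR'$---whose limit lies in the relative interior of $C^{(F', \cR')}_{\cF'}$.

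The hardest part will be harmonizing the two independent degenerations---the Lawrence-cone degeneration from $\sigma_{F, \cR}$ into its face $\sigma_{F', \cR'}$ and the Bergman-cone degeneration from $\cF$ through $\trunc_{F'}(\cF)$ down to $\cF'$---inside the extended tropicalization $\overline{N}_{\RR}^{\Delta_{\cA}}$. Since the linear map $\Trop(\Phi|_{O(\sigma)})$ changes with the ambient orbit, I will need to verify that the preimage-of-Bergman-fan descriptions on neighboring strata glue consistently, so that the orbit-by-orbit closures recover the genuine closure inside $\Trop(\fM_{\cA})$. This in turn rests on irreducibility of $\fM_{\cA} \cap O(\sigma_{F, \cR})$ (Proposition~\ref{prop:linear}) together with the fact that $\fM_{\cA}$ is the closure of its intersection with the dense Lawrence torus $\widetilde{T}$, which ensures $\Trop(\fM_{\cA})$ equals the closure of $\Trop(\fM_{\cA} \cap \widetilde{T})$ in $\overline{N}_{\RR}^{\Delta_{\cA}}$.
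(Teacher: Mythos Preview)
Your overall strategy coincides with the paper's: decompose stratum by stratum via Proposition~\ref{prop:linear}, identify each $\Trop(\fM_{\cA}\cap O(\sigma_{F,\cR}))$ as the preimage of the Bergman fan of $\cA_0^F$ under $\Trop(\Phi|_{O(\sigma_{F,\cR})})$, and read off the dimension and balancedness from that description. Your necessity argument for the closure relation---obtaining $F\subseteq F'$ and $\cR'\subseteq\overline{\cR}$ from the orbit stratification of $\overline{N}_{\RR}^{\Delta_{\cA}}$, then pushing forward along $\Trop(\Phi)$ to force $\beta^{(F')}_{\cF'}\subseteq\overline{\beta^{(F)}_{\cF}}$ in the extended Bergman fan---is correct and arguably cleaner than what the paper does. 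The paper instead proves a dedicated lemma (Lemma~\ref{lem:clcone}) which computes $\overline{C^{(F,\cR)}_{\cF}}\cap N_{\RR}(\sigma_{S,\cR'})$ \emph{exactly}, by first checking when $C^{(F,\cR)}_{\cF}$ meets $\relint\pi_{\sigma_{F,\cR}}(\sigma_{S,\cR'})$, then invoking the projection formula of Osserman--Rabinoff \cite[Lemma~3.9]{OR13} to identify the boundary intersection with $\pi^{\sigma_{F,\cR}}_{\sigma_{S,\cR'}}(C^{(F,\cR)}_{\cF})$, and finally showing this projection equals $C^{(S,\cR')}_{\trunc_S(\cF)}$ via an explicit lifting.

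Your sufficiency sketch, however, has a genuine gap. Exhibiting \emph{one} one-parameter family in $C^{(F,\cR)}_{\cF}$ whose limit lands in $\relint C^{(F',\cR')}_{\cF'}$ does not prove $C^{(F',\cR')}_{\cF'}\subseteq\overline{C^{(F,\cR)}_{\cF}}$; you need such a family for \emph{every} target point of $C^{(F',\cR')}_{\trunc_{F'}(\cF)}$ (after which the faces $C^{(F',\cR')}_{\cF'}$ with $\cF'$ coarsening $\trunc_{F'}(\cF)$ come for free). This pointwise surjectivity is precisely the content of the lifting step in Lemma~\ref{lem:clcone}, and without the projection formula you have no a~priori reason to know that $\overline{C^{(F,\cR)}_{\cF}}\cap N_{\RR}(\sigma_{F',\cR'})$ is even a full-dimensional subcone of $C^{(F',\cR')}_{\trunc_{F'}(\cF)}$. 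Separately, your phrase ``deforming a base-point of $\cR$ toward the face $\cR'$'' conflates the ambient space $M_{\RR}$ of the arrangement with the cocharacter space $\widetilde{N}_{\RR}$ where the tropicalization lives: the relevant degeneration takes place entirely in $\widetilde{N}_{\RR}(\sigma_{F,\cR})$, by moving into $\relint\pi_{\sigma_{F,\cR}}(\sigma_{F',\cR'})$ while scaling the $\delta_{F'\setminus F}$-direction to infinity.
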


Given a flat $F$ and a face $\cR$ of $\cA_F$, there are two fans which live in $\Trop(O(\sigma_{F, \cR})) = N_{\RR}(\sigma_{F, \cR})$: the fan $\Trop(\fM_{\cA} \cap O(\sigma_{F, \cR}))$ and the fan of the orbit closure $\overline{O(\sigma_{F, \cR})}$. The former fan has cones $C^{(F, \cR)}_{\cF}$ indexed by flags of flats in $\cA_0^F$, while the latter consists of the projections of the cones $\sigma_{S, \cR'}$ with $\sigma_{S, \cR'} \succ \sigma_{F, \cR}$ (by Proposition \ref{prop:cones}, this is equivalent to $S \supseteq F$ and $\cR' \subseteq \overline{\cR}$). The following lemma relates these two fans.

\begin{lem} \label{lem:clcone}
Let $F$ be a flat of $\cA_0$ and $\cR$ a face of $\cA_F$. Given a set $S \subseteq E$ which contains $F$ and a face $\cR'$ of $\cA_S$ contained in $\overline{\cR}$, the intersection
\[ C_{\cF}^{(F, \cR)} \cap \relint (\pi_{\sigma_{F, \cR}}(\sigma_{S, \cR'})), \]
for $\cF$ a flag of flats of $\cA_0^F$, is nonempty if and only if $S$ is a flat of $\cA$ which appears in the flag $\cF$. In this case,
\[ \overline{C^{(F, \cR)}_{\cF}} \cap \Trop(O(\sigma_{S, \cR'})) = C^{(F, \cR)}_{\trunc_S(\cF)}. \]
\end{lem}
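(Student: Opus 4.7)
The proof reduces both assertions to standard facts about the extended Bergman fan of $\cA_0$ via the toric morphism $\Phi \colon \fB_{\cA} \to \AA^E$. For each face $\sigma_{S, \cR'} \succ \sigma_{F, \cR}$ of $\Delta_{\cA}$, the restriction $\Phi|_{O(\sigma_{S, \cR'})} \colon O(\sigma_{S, \cR'}) \twoheadrightarrow \GG_m^{E \setminus S}$ is a split surjection (as noted in the discussion preceding Proposition \ref{prop:linear}), so its tropicalization $\Trop(\Phi)_S \colon N_\RR(\sigma_{S, \cR'}) \to \RR^{E \setminus S}$ is a split linear surjection of real vector spaces. By definition, $C_\cF^{(F, \cR)} = \Trop(\Phi)_F^{-1}(\beta_\cF^{(F)})$ and the cone $C_{\trunc_S(\cF)}^{(S, \cR')}$ is the analogous preimage downstairs. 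These maps fit together with the cone projections $\pi_{\sigma_{F, \cR}}$ and $\pi_{\RR_{\geq 0}^F}$ into a commutative square in which $\pi_{\sigma_{F, \cR}}(\sigma_{S, \cR'})$ surjects onto $\RR_{\geq 0}^{S \setminus F}$, with relative interiors going to relative interiors.

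For the first assertion, because $\Trop(\Phi)_F$ is split surjective, the intersection $C_\cF^{(F, \cR)} \cap \relint \pi_{\sigma_{F, \cR}}(\sigma_{S, \cR'})$ is nonempty if and only if $\beta_\cF^{(F)} \cap \RR_{>0}^{S \setminus F}$ is nonempty. This last condition is a direct calculation on the generators of $\beta_\cF^{(F)}$: writing a general point as $\sum_{i=1}^{k-1} c_i \delta_{F_i \setminus F} + c_k \delta_{E \setminus F}$ with $c_1, \ldots, c_{k-1} \geq 0$, the coordinate at $e \in E \setminus F$ equals $c_{j(e)} + \cdots + c_k$, where $j(e) = \min\{j \mid e \in F_j\}$. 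One then checks that these coordinates can be simultaneously positive on $S \setminus F$ and zero on $E \setminus S$ precisely when $S$ appears in $\cF$ as some $F_j$ (take $c_1, \ldots, c_j > 0$ and $c_{j+1} = \cdots = c_k = 0$); since $\cF$ is a flag of flats, such an $S$ is automatically a flat of $\cA_0$.

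For the second assertion, $\overline{C_\cF^{(F, \cR)}}$ lies in $\Trop(\fM_{\cA})$, so the orbit-by-orbit description of tropicalization gives $\overline{C_\cF^{(F, \cR)}} \cap N_\RR(\sigma_{S, \cR'}) \subseteq \Trop(\fM_{\cA} \cap O(\sigma_{S, \cR'}))$. Since $\Trop(\Phi)$ is continuous and proper it commutes with closures, so this intersection maps under $\Trop(\Phi)_S$ onto $\overline{\beta_\cF^{(F)}} \cap \RR^{E \setminus S} = \beta_{\trunc_S(\cF)}^{(S)}$, by the standard closure formula for cones in the extended Bergman fan. Combined with the identification $\Trop(\fM_{\cA} \cap O(\sigma_{S, \cR'})) = \Trop(\Phi)_S^{-1}(\Trop(L_{\cA_0} \cap \GG_m^{E \setminus S}))$ (which follows from splitness of $\Phi$ on the orbit together with Lemma \ref{lem:linear}), the preimage of $\beta_{\trunc_S(\cF)}^{(S)}$ inside this stratum is precisely $C_{\trunc_S(\cF)}^{(S, \cR')}$, giving the claimed equality. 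The main technical point is to verify that this image is the \emph{entire} preimage and not a proper subset; this is handled by choosing all splittings compatibly from a single section of the surjection $\widetilde{N} \twoheadrightarrow \ZZ^E$ of lattices, which ensures that any lift of the limiting point in $C_{\trunc_S(\cF)}^{(S, \cR')}$ arises as a limit of lifts from $C_\cF^{(F, \cR)}$.
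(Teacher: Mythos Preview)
Your reduction to the extended Bergman fan via $\Trop(\Phi)$ is exactly the paper's strategy, and your treatment of the first assertion is correct: the equivalence with $\beta_\cF^{(F)} \cap \RR^{S \setminus F}_{>0} \neq \emptyset$ and the computation on the Bergman generators are precisely what is needed. The paper carries out the backward direction by explicitly constructing a preimage $v = \sum \gamma_e$, but this is just one way to realize your abstract argument.

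The second assertion, however, has a genuine gap. The claim that $\Trop(\Phi)$ is proper is false. Already for the arrangement consisting of a single hyperplane in $M_\RR \cong \RR$, one has $\fB_\cA \cong \AA^2$ and $\Phi \colon \AA^2 \to \AA^1$ is $(z,w) \mapsto zw$; its tropicalization $\TT^2 \to \TT$, $(a,b) \mapsto a+b$, is neither proper nor closed (the closed set $\{(n,-n+1/n) : n \geq 1\}$ has non-closed image $\{1/n\}$). So the step ``$\Trop(\Phi)$ commutes with closures, hence this intersection maps \emph{onto} $\overline{\beta_\cF^{(F)}} \cap \RR^{E \setminus S}$'' is unjustified. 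Continuity alone gives only the inclusion $\overline{C_\cF^{(F,\cR)}} \cap N_\RR(\sigma_{S,\cR'}) \subseteq C_{\trunc_S(\cF)}^{(S,\cR')}$. For the reverse inclusion you must produce, for each $v \in C_{\trunc_S(\cF)}^{(S,\cR')}$, points of $C_\cF^{(F,\cR)}$ converging to $v$ in $\overline{\widetilde{N}}_\RR^{\Delta_\cA}$. Your compatible-splittings hint does not settle this: a single global section $s \colon \ZZ^E \to \widetilde{N}$ sends each $\delta_e$ to one fixed combination of $\rho_e^{\pm}$, so the lifted paths go to infinity along a \emph{fixed} direction and their limits land in $N_\RR(\sigma_{S,\cR''})$ for one particular face $\cR''$ of $\cA_S$, not for an arbitrary $\cR' \subseteq \overline\cR$.

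The paper avoids this difficulty by invoking \cite[Lemma~3.9]{OR13}, which identifies $\overline{C_\cF^{(F,\cR)}} \cap N_\RR(\sigma_{S,\cR'})$ with the projection $\pi^{\sigma_{F,\cR}}_{\sigma_{S,\cR'}}(C_\cF^{(F,\cR)})$ once one knows the cone meets $\relint(\pi_{\sigma_{F,\cR}}(\sigma_{S,\cR'}))$ --- i.e., once the first part of the lemma holds. This reduces the question to computing that projection, which the paper does via the same commutative square you set up: one inclusion comes from $\beta_\cF^{(F)}$ projecting to $\beta_{\trunc_S(\cF)}^{(S)}$, and the other from taking an arbitrary $\pi$-preimage of $w \in C_{\trunc_S(\cF)}^{(S,\cR')}$ and correcting it by an element of $\ker \pi^{\sigma_{F,\cR}}_{\sigma_{S,\cR'}}$ (which surjects onto $\RR^{S \setminus F}$) so that it lands in $C_\cF^{(F,\cR)}$. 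The citation to \cite{OR13} is precisely what absorbs the ``which stratum does the limit land in'' subtlety that your splitting sketch leaves open.
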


\begin{proof}
First, because $\Phi(\sigma_{F, \cR}) = \RR^{F}_{\geq 0}$, we have that
\[ \Trop(\Phi)|_{\Trop(O(\sigma_{F, \cR}))} \colon \Trop(O(\sigma_{F, \cR})) = \widetilde{N}_{\RR}(\sigma_{F, \cR}) \to \RR^{E \setminus F} = \RR^E/\RR^F \]
is given by $[v] \mapsto [\Phi(v)]$.

Now, suppose $v \in C_{\cF}^{(F, \cR)} \cap \relint (\pi_{\sigma_{F, \cR}}(\sigma_{S, \cR'}))$. By Proposition \ref{prop:cones}, every vector in $\sigma_{S, \cR'}$ as a linear combination with positive coefficients of the generators $\rho_e^+, \rho_f^-$ for $e \in S^+(\cR')$, $f \in S^-(\cR')$. Then $v$ is the image of such a vector under $\pi_{\sigma_{F, \cR}}$, which kills all generators of $\sigma_{S, \cR'}$ indexed by elements of $F$ (since $F^{\pm}(\cR) \subseteq S^{\pm}(\cR')$ by Lemma \ref{lem:clface}). Since the square
\[
\begin{tikzcd}
	\widetilde{N}_{\RR}
		\ar{r}{\Trop(\Phi)}
		\ar{d}[swap]{\pi_{\sigma_{F, \cR}}} &
	\RR^E
		\ar{d} & \\
	\widetilde{N}_{\RR}(\sigma_{F, \cR})
		\ar{r}{\Trop(\Phi)} &
	\RR^{E \setminus F}
\end{tikzcd}
\]
commutes, it follows that $\Trop(\Phi)(v) \in \RR^{E \setminus F}$ will lie in $\RR^{S \setminus F}_{>0} \cap \beta^{(F)}_{\cF}$. By the definition of $\beta^{(F)}_{\cF}$, this intersection is nonempty if and only if $S$ is a flat in $\cF$.

Conversely, suppose $S$ is a flat in $\cF$. For $e \in S \setminus F$, define
\[ \gamma_e = \begin{cases}
\frac{1}{2}(\rho_e^+ + \rho_e^-) & \text{if $e \in S^0(\cR')$,} \\
\rho_e^+ & \text{if $e \in S^+(\cR) \setminus S^0(\cR')$,} \\
\rho_e^- & \text{if $e \in S^-(\cR) \setminus S^0(\cR')$,}
\end{cases} \]
and let $v = \sum_{e \in S \setminus F} \gamma_e$. Then $v \in \relint (\pi_{\sigma_{F, \cR}}(\sigma_{S, \cR'}))$ by design. Since $\Trop(\Phi)(v) = \delta_{S \setminus F} \in \beta_{\cF}$, it follows that $v \in \Trop(\Phi)^{-1}(\beta_{\cF}) = C_{\cF}^{(F, \cR)}$, and therefore $C_{\cF}^{(F, \cR)} \cap \relint (\pi_{\sigma_{F, \cR}}(\sigma_{S, \cR'})) \neq \emptyset$.

Supposing now that $S$ is a flat in $\cF$, we have
\[ \overline{C^{(F, \cR)}_{\cF}} \cap \Trop(O(\sigma_{S, \cR'})) = \pi^{\sigma_{F, \cR}}_{\sigma_{S, \cR'}}(C^{(F, \cR)}_{\cF}) \]
by \cite[Lemma 3.9]{OR13}, so we need only prove that this projection coincides with $C^{(S, \cR')}_{\trunc_S(\cF)}$. By commutativity of the square
\[
\begin{tikzcd}
	\widetilde{N}_{\RR}(\sigma_{F, \cR})
		\ar{r}{\Trop(\Phi)}
		\ar{d}[swap]{\pi^{\sigma_{F, \cR}}_{\sigma_{S, \cR'}}} &
	\RR^{E \setminus F}
		\ar{d} & \\
	\widetilde{N}_{\RR}(\sigma_{S, \cR'})
		\ar{r}{\Trop(\Phi)} &
	\RR^{E \setminus S}
\end{tikzcd}
\]
we see that $\Trop(\Phi)$ maps $\pi^{\sigma_{F, \cR}}_{\sigma_{S, \cR'}}(C^{(F, \cR)}_{\cF})$ onto $\beta^{(S)}_{\trunc_S(\cF)}$, which shows that $\pi^{\sigma_{F, \cR}}_{\sigma_{S, \cR'}}(C^{(F, \cR)}_{\cF}) \subseteq C^{(S, \cR')}_{\trunc_S(\cF)}$. 

On the other hand, if $w \in C^{(S, \cR')}_{\trunc_S(\cF)}$ and $v$ is a preimage of $w$ under $\pi^{\sigma_{F, \cR}}_{\sigma_{S, \cR'}}$, then $\Trop(\Phi)(v)$ need not lie in $\beta^{(F)}_{\cF}$, so that $v$ need not be in $C^{(F, \cR)}_{\cF}$. However, we can choose $x \in \RR^{S \setminus F}$ so that $\Trop(\Phi)(v) + x \in \beta_{\cF}$. Let $v' \in \widetilde{N}_{\RR}(\sigma_{F, \cR})$ be any preimage of $x$ under $\Trop(\Phi)$. Then, since $\pi^{\sigma_{F, \cR}}_{\sigma_{S, \cR'}}(v') = 0$, we have that $v + v'$ is a preimage of $w$ in $C^{(F, \cR)}_{\cF}$. This shows the reverse inclusion $C^{(S, \cR')}_{\trunc_S(\cF)} \subseteq \pi^{\sigma_{F, \cR}}_{\sigma_{S, \cR'}}(C^{(F, \cR)}_{\cF})$.
\end{proof}

We are now ready to prove the main theorem.

\begin{proof}[Proof of Theorem \ref{thm:htvcones}]
In order to have $C^{(F', \cR')}_{\cF'} \subseteq \overline{C^{(F, \cR)}_{\cF}}$, it is necessary for $\overline{C^{(F, \cR)}_{\cF}} \cap \Trop(O(\sigma_{F', \cR'}))$ to be nonempty. By Lemma \ref{lem:clcone}, this occurs precisely when $F' \supseteq F$ is a flat in $\cF$ and $\cR' \subseteq \overline{\cR}$. In this case,
\[ \overline{C^{(F, \cR)}_{\cF}} \cap \Trop(O(\sigma_{F', \cR'})) = C^{(F', \cR')}_{\trunc_F{\cF}} \]
will contain $C^{(F', \cR')}_{\cF'}$ as a face if and only if $\trunc_{F'}(\cF)$ is a refinement of $\cF'$.

A cone $C^{(F, \cR)}_{\cF}$ of $\Trop(\fM_{\cA} \cap O(\sigma_{F, \cR}))$ is maximal if $\cF$ is a maximal flag of flats. Such a cone has dimension $\dim (\fM_{\cA} \cap O(\sigma_{F, \cR}))$. By Proposition \ref{prop:linear}, this dimension equals $2d - \rk F - \codim \cR$. In general, the difference in length between a maximal flag and $\cF$ is $\rk \cA_0^F - \ell(\cF) = d - \rk F - \ell(\cF)$, and this number is also the difference in dimension between $C^{(F, \cR)}_{\cF}$ and a maximal cone. Thus, we have
\[ \dim C^{(F, \cR)}_{\cF} = 2d - \rk F - \codim \cR - (d - \rk F - \ell(\cF)) = d + \ell(\cF) + \codim \cR. \]
\end{proof}

Lemma \ref{lem:clcone}, applied to the cones $C_{\cF}^{\emptyset, \widetilde{M}_{\RR}}$ of $\Trop(\fM_{\cA} \cap \widetilde{T})$, also allows us to prove that $\fM_{\cA}$ is faithfully tropicalized by its embedding in $\fB_{\cA}$ by using the criteria of \cite{GRW15}.




\begin{thm} \label{thm:faithful}
There is a unique continuous section $s \colon \Trop(\fM_{\cA}) \to \fM_{\cA}^{\an}$ of the tropicalization map.
\end{thm}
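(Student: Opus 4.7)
The plan is to apply the Gubler--Rabinoff--Werner criterion \cite[Theorem 8.14]{GRW15}, which was recalled in the introduction, to the embedding $\fM_{\cA} \hookrightarrow \fB_{\cA}$. This reduces the theorem to verifying the four bulleted conditions listed there. Three of these conditions are essentially corollaries of the work already done in Sections \ref{sec:htv} and the beginning of Section \ref{sec:trophtv}, so the content of the proof lies in checking the polyhedral compatibility condition, for which Lemma \ref{lem:clcone} is the key tool.

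First, I would check density: since $\fM_{\cA}$ is irreducible of dimension $2d$, and since $\emptyset$ is a flat of $\cA_0$, Proposition \ref{prop:linear} gives that $\fM_{\cA} \cap \widetilde{T} = \fM_{\cA} \cap O(\sigma_{\emptyset, \widetilde{M}_{\RR}})$ is a nonempty linear subvariety of $\widetilde{T}$ of dimension $2d$, hence dense in $\fM_{\cA}$. Second, equidimensionality of each nonempty intersection $\fM_{\cA} \cap O(\sigma_{F, \cR})$ is immediate from Proposition \ref{prop:linear}, which shows these intersections are in fact irreducible. Third, tropical multiplicity one on $\Trop(\fM_{\cA} \cap \widetilde{T})$ is built into the polyhedral structure coming from Theorem \ref{thm:htvcones}: the cones $C_{\cF}^{(\emptyset, \widetilde{M}_{\RR})}$ inherit unit weights from the Bergman fan via the split surjection $\Trop(\Phi)|_{\Trop(\widetilde{T})}$.

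The main step is the fourth condition: every maximal polyhedron $P$ of $\Trop(\fM_{\cA} \cap \widetilde{T})$ has closure which is a union of polyhedra, each maximal in its respective stratum. A maximal polyhedron of $\Trop(\fM_{\cA} \cap \widetilde{T})$ has the form $P = C_{\cF}^{(\emptyset, \widetilde{M}_{\RR})}$ where $\cF$ is a maximal flag of flats in $\cA_0$. For any stratum $\Trop(\fM_{\cA} \cap O(\sigma_{F', \cR'}))$, Lemma \ref{lem:clcone} says that $\overline{P} \cap \Trop(O(\sigma_{F', \cR'}))$ is nonempty precisely when $F'$ is a flat appearing in $\cF$ (which forces $F'$ to be a flat of $\cA_0$, hence the stratum is nonempty) and $\cR' \subseteq \overline{\widetilde{M}_{\RR}}$ (vacuous), in which case the intersection equals $C_{\trunc_{F'}(\cF)}^{(F', \cR')}$. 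Since $\cF$ is maximal, so is $\trunc_{F'}(\cF)$ as a flag of flats in $\cA_0^{F'}$, so this is a maximal cone in its stratum by Theorem \ref{thm:htvcones}. Thus $\overline{P}$ is covered by cones maximal in their respective strata.

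With all four criteria verified, \cite[Theorem 8.14]{GRW15} yields the existence and uniqueness of the continuous section $s \colon \Trop(\fM_{\cA}) \to \fM_{\cA}^{\an}$. I expect the only subtle point to be bookkeeping in the polyhedral compatibility step, specifically making sure the indexing conventions of Theorem \ref{thm:htvcones} and Lemma \ref{lem:clcone} align with the GRW hypothesis that the candidate polyhedral structure on $\Trop(\fM_{\cA})$ genuinely induces the correct polyhedral structure on each stratum; this is guaranteed by the last sentence of Theorem \ref{thm:htvcones}.
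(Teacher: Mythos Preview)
Your proposal is correct and follows essentially the same route as the paper: verify the four Gubler--Rabinoff--Werner criteria using Proposition~\ref{prop:linear} for density and equidimensionality, Theorem~\ref{thm:htvcones} for multiplicity one, and Lemma~\ref{lem:clcone} for the polyhedral compatibility of maximal-cone closures. Two small sharpenings: the multiplicity-one condition in \cite{GRW15} is required on \emph{every} stratum, not just $\Trop(\fM_{\cA}\cap\widetilde{T})$ (Theorem~\ref{thm:htvcones} gives this), and the paper invokes \cite[Proposition~8.8]{GRW15} separately for uniqueness rather than extracting it from Theorem~8.14.
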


\begin{proof}
Let $C^{\emptyset, \widetilde{M}_{\RR}}_{\cF}$ be a maximal cone in $\Trop(\fM_{\cA} \cap \widetilde{T})$. Then $C^{\emptyset, \widetilde{M}_{\RR}}_{\cF}$ has dimension $2d$ and $\cF$ is a maximal flag of flats of $\cA_0$. By Lemma \ref{lem:clcone}, for $S \subseteq E$ and $\cR$ a face of $\cA_S$, we have $C^{\emptyset, \widetilde{M}_{\RR}}_{\cF} \cap \relint(\sigma_{S, \cR}) \neq \emptyset$ if and only if $S$ is a flat in $\cF$. In this case, $\pi_{\sigma_{S,\cR}}(C^{\emptyset, \widetilde{M}_{\RR}}_{\cF}) = C^{S, \cR}_{\trunc_S(\cF)}$ is a maximal cone in $\Trop(O(\sigma_{S, \cR}))$ because $\trunc_S(\cF)$ is a maximal flag of flats of $\cA_0^F$

Since $\fM_{\cA}$ is irreducible, $\fM_{\cA} \cap \widetilde{T}$ is dense in $\fM_{\cA}$. By Proposition \ref{prop:linear}, the intersection of $\fM_{\cA}$ with any torus orbit in $\fB_{\cA}$ is either empty or equidimensional. Because $\Trop(\fM_{\cA})$ has multiplicity one everywhere, the section $s$ exists and is continuous by \cite[Theorem 8.14]{GRW15}, and is unique by \cite[Proposition 8.8]{GRW15}.
\end{proof}

Finally, we note that there is a more general notion of hypertoric variety than we have discussed here. Arbo and Proudfoot \cite{AP16} have recently shown how to construct a hypertoric variety from a zonotopal tiling $\cT$. Such a hypertoric variety is also embedded in a (generalized) Lawrence toric variety, and agrees with the variety constructed in Section \ref{sec:htv} in the case where $\cT$ is a regular tiling and hence normal to some affine arrangement. We have chosen to restrict our attention to hypertoric varieties defined by arrangements, as opposed to zonotopal tilings, primarily as a matter of convenience.

However, the Lawrence embedding of such a generalized hypertoric variety is Zariski-locally isomorphic to the embedding of $\fM_{\cA}$ into $\fB_{\cA}$, for some $\cA$. It follows from Theorem \ref{thm:faithful} that the resulting tropicalization has a cover by dense open subsets, each of which has a unique continuous section defined on it. By uniqueness, these sections must agree on overlaps, and so we see that this tropicalization is also faithful.

\begin{cor}
If $\fM_{\cT}$ is a hypertoric variety associated to a zonotopal tiling, as in \cite{AP16}, then there is a unique continuous section $s \colon \Trop(\fM_{\cT}) \to \fM_{\cT}^{\an}$ of tropicalization.
\end{cor}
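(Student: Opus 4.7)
The plan is to verify the four criteria of \cite[Theorem 8.14]{GRW15} for the embedding $\fM_{\cA} \hookrightarrow \fB_{\cA}$, and then invoke \cite[Proposition 8.8]{GRW15} to upgrade existence of a continuous section to uniqueness.

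First I would dispense with the three easier criteria, each of which is essentially already in hand. Because $\fM_{\cA}$ is irreducible and $\fM_{\cA} \cap \widetilde{T}$ is nonempty (the $F = \emptyset$ case of Proposition \ref{prop:linear}), the intersection $\fM_{\cA} \cap \widetilde{T}$ is dense in $\fM_{\cA}$. Proposition \ref{prop:linear} also identifies each nonempty $\fM_{\cA} \cap O(\sigma_{S, \cR})$ as a linear subvariety of $O(\sigma_{S, \cR})$, hence irreducible and in particular equidimensional. Multiplicity one everywhere is already part of Theorem \ref{thm:htvcones}.

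The hard part will be the fourth criterion, which demands a polyhedral structure on $\Trop(\fM_{\cA})$ in which every maximal polyhedron $P$ of $\Trop(\fM_{\cA} \cap \widetilde{T})$ has closure covered by polyhedra that are themselves maximal in their respective strata. The candidate structure is the one produced in Theorem \ref{thm:htvcones}. A maximal cone of $\Trop(\fM_{\cA} \cap \widetilde{T})$ is of the form $C^{(\emptyset, \widetilde{M}_{\RR})}_{\cF}$ with $\cF$ a maximal flag of flats of $\cA_0$, so for each orbit cone $\sigma_{S, \cR}$ I would apply Lemma \ref{lem:clcone} to compute $\overline{C^{(\emptyset, \widetilde{M}_{\RR})}_{\cF}} \cap \Trop(O(\sigma_{S, \cR}))$. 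That lemma says this intersection is nonempty precisely when $S$ is a flat appearing in $\cF$, and that in this case it equals $C^{(S, \cR)}_{\trunc_S(\cF)}$. Since truncating a maximal flag of flats of $\cA_0$ at a member $S$ of that flag yields a maximal flag of flats of $\cA_0^S$, the cone $C^{(S, \cR)}_{\trunc_S(\cF)}$ is maximal in the stratum $\Trop(\fM_{\cA} \cap O(\sigma_{S, \cR}))$, which is exactly what the criterion requires.

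With all four hypotheses of \cite[Theorem 8.14]{GRW15} verified, that theorem produces a continuous section $s \colon \Trop(\fM_{\cA}) \to \fM_{\cA}^{\an}$, and \cite[Proposition 8.8]{GRW15} immediately gives its uniqueness.
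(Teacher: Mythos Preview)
Your argument is a correct proof of Theorem~\ref{thm:faithful}, but not of the Corollary you were asked to prove. The Corollary concerns the \emph{generalized} hypertoric variety $\fM_{\cT}$ attached to a zonotopal tiling $\cT$ as in \cite{AP16}, not the arrangement-based $\fM_{\cA}$. These agree only when $\cT$ is a regular tiling, so you cannot simply run the GRW criteria directly on $\fM_{\cT}$: the combinatorial description of the Lawrence fan in Proposition~\ref{prop:cones}, the linearity statement of Proposition~\ref{prop:linear}, and the cone structure of Theorem~\ref{thm:htvcones} are all established only for $\fM_{\cA}$.

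The paper's proof of the Corollary instead uses Theorem~\ref{thm:faithful} as a black box together with one additional input: the Lawrence embedding of $\fM_{\cT}$ is Zariski-locally isomorphic to the embedding $\fM_{\cA} \hookrightarrow \fB_{\cA}$ for suitable arrangements $\cA$. Theorem~\ref{thm:faithful} then furnishes a unique continuous section on each piece of this open cover of $\Trop(\fM_{\cT})$, and uniqueness forces these local sections to agree on overlaps, so they glue to a global continuous section. Your write-up is missing exactly this localization-and-gluing step; without it, nothing you wrote applies to a non-regular tiling $\cT$.
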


\begin{center}
\noindent\rule{4cm}{.5pt}
\vspace{.25cm}

\noindent {\sc \small \author}\\
{\small Department of Mathematics, University of Oregon, Eugene OR 97403} \\
email: {\href{mailto:\myemail}{\nolinkurl{\myemail}}}
\end{center}

\end{document}